\newtheorem{thm}{theorem}[section]
\newtheorem{theorem}[thm]{Theorem}
\newtheorem{proposition}[thm]{Proposition}
\newtheorem{lemma}[thm]{Lemma}
\newtheorem{remark}[thm]{Remark}
\newtheorem{definition}[thm]{Definition}
\begin{document}

\title{Graded Polynomial Identities for Matrices with the Transpose Involution over an Infinite Field}
\author{
Luís Felipe Gonçalves Fonseca
\thanks{\texttt{luisfelipe@ufv.br}}
\\
\\
Instituto de Ciências Exatas e Tecnlógicas \\
Universidade Federal de Viçosa \\
Florestal, MG, Brazil
\\
\\
Thiago Castilho de Mello \thanks{\texttt{tcmello@unifesp.br}}
\thanks{supported by Fapesp grant No. 2014/10352-4, Fapesp grant No.2014/09310-5, and CNPq grant No. 461820/2014-5. }
\\
\\
Instituto de Ci\^encia e Tecnologia\\
Universidade Federal de S\~ao Paulo\\
S\~ao Jos\'e dos Campos, SP, Brazil}

\maketitle

\begin{abstract}

    Let $F$ be an infinite field, and let $M_{n}(F)$ be the algebra of $n\times n$ matrices
    over $F$. Suppose that this algebra is equipped with an elementary grading whose neutral
    component coincides with the main diagonal. In this paper, we find a basis for the graded polynomial identities of $M_{n}(F)$ with the transpose involution. Our results generalize for infinite fields of arbitrary characteristic previous results in the literature which were obtained for the field of complex numbers and for a particular class of elementary G-gradings.
    
\end{abstract}

\section{Introduction}

Let $F$ be a field and $A$ be an $F$-algebra. A polynomial identity
of the algebra $A$ is a polynomial in noncommuting variables which
vanishes under any substitution of these variables by elements of
$A$.

One of the first important results about polynomial identities in
algebras is the Amitsur-Levitzki Theorem which proves that the
standard polynomial of degree $2n$ is a polynomial identity for
$M_n(F)$. Specht \cite{Specht} raised the question whether the
T-ideal of all polynomial identities of a given algebra is finitely
generated as a T-ideal. This problem was answered by Kemer
\cite{Kemer} using the characteristic zero some decades latter.
Although Kemer proved that there always exists a finite basis for
the identities of a given algebra in characteristic zero, it is very
difficult problem to find any such basis. The explicit polynomial
identities for concrete algebras are known in very few cases. For
instance, for $M_n(\mathbb{C})$ such a basis is known only if $n=1$
or $2$. In light of this, mathematicians started to work with
`weaker' polynomial identities such as identities with
trace, identities with involution and graded identities. It is worth mentioning that the graded identities were also used by Kemer in the solution of the Specht problem.

Subsequent to the pioneering work of Di-Vincenzo \cite{DiVincenzo}
about graded identities, many authors described the graded
identities of $M_{n}(F)$ and other different important algebras in
different contexts \cite{Azevedo}, \cite{Azevedo2},
\cite{BahturinDrensky}, \cite{Diogo}, \cite{Vasilovsky},
\cite{DiogoThiago}, \cite{CentroneMello} and \cite{Vasilovsky2}.
Also, identities with involution for $M_n(F)$ have been studied by
some authors \cite{Jones}.

Recently Haile and Natapov \cite{Haile} exhibited a basis for the
graded identities with involution of $M_n(\mathbb{C})$, endowed with
the transpose involution and with a crossed-product grading. This
grading is an elementary grading of $M_n(F)$ by a group
$G=\{g_1,\dots, g_n\}$ induced by the $n$-tuple $(g_1,\dots, g_n)$.
The authors used the graph theory as in \cite{Haile0}.

In this paper we generalize the results of Haile and Natapov for a broader class of
gradings and for infinite fields of arbitrary characteristic. The main tool here is the use of generic matrices,
and we use ideas similar to those in \cite{Azevedo},
\cite{Azevedo2}, \cite{CentroneMello}, \cite{Diogo}, and
\cite{DiogoThiago}.

\section{Preliminaries}

We denote by $F$ an infinite field of arbitrary characteristic. All vector spaces and algebras
are over $F$. We denote the algebra of $n\times n$ matrices over $F$
by $M_{n}(F)$ and a group with the unity $e$ by $G$.

If $A$ is an algebra and $G$ is a group, a $G$-grading on $A$ is a
decomposition of $A$ as a direct sum of subspaces $A=\oplus_{g\in
G}A_g$, indexed by elements of the group $G$, which satisfy
$A_gA_h\subseteq A_{gh}$, for any $g, h\in G$. If $a\in A_g -
\{0\}$, for some $g\in G$, we say that $a$ is homogeneous of degree
$g$ and we denote $\deg (a)=g$. The support of the grading, is the
subset of $G$, $\text{Supp}(A)=\{g\in G\,;\, A_g\neq \{0\}\}$.

If $1\leq i, j \leq n$, we denote by $e_{ij}$ the matrix with 1 on the position
$(i, j)$, and and 0 elsewhere. We call them \emph{elementary matrices}, or \emph{matrix units}.

Now let $(g_1,\dots, g_n)\in G^n$ be an $n$-tuple of elements of
$G$. For each $g\in G$, let $R_g\subseteq M_n(F)$ be the subspace
generated by the elementary matrices $e_{ij}$ for $i$ and $j$
satisfying $g_{i}^{-1}g_{j}=g$. Then $M_n(F)=\oplus_{g\in G} R_g$ is
a $G$-grading on $M_n(F)$ called \emph{elementary grading defined by
$(g_1,\dots,g_n)$}.

We recall a known result from \cite{Dascalescu}, which characterizes elementary gradings on $M_n(F)$.

\begin{theorem}
    If $G$ is any group, a $G$-grading of $M_{n}(F)$ is elementary if and only if all matrix units $e_{ij}$ are homogeneous.
\end{theorem}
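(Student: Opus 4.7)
The forward direction is immediate from the definition of an elementary grading: each $e_{ij}$ lies in $R_{g_i^{-1}g_j}$, hence is homogeneous. So the content is the converse: assuming every $e_{ij}$ is homogeneous, produce an $n$-tuple $(g_1,\dots,g_n)\in G^n$ realizing the grading.

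My plan is to read off the degrees of the matrix units using the multiplication table of $M_n(F)$. First I would observe that since $e_{ii}^2=e_{ii}\neq 0$, one has $\deg(e_{ii})^2=\deg(e_{ii})$ in $G$, hence $\deg(e_{ii})=e$ for every $i$. Next, from $e_{ij}e_{ji}=e_{ii}$ and $e_{ji}e_{ij}=e_{jj}$, both of degree $e$, I get $\deg(e_{ji})=\deg(e_{ij})^{-1}$. Finally, the relation $e_{i1}e_{1j}=e_{ij}$ gives $\deg(e_{ij})=\deg(e_{i1})\deg(e_{1j})$.

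With these relations at hand, I define $g_i:=\deg(e_{1i})$ for $i=1,\dots,n$, so that $g_1=\deg(e_{11})=e$ and $\deg(e_{i1})=g_i^{-1}$. Then
\[
\deg(e_{ij}) \;=\; \deg(e_{i1})\deg(e_{1j}) \;=\; g_i^{-1}g_j,
\]
which is exactly the formula prescribed by the elementary grading associated to $(g_1,\dots,g_n)$. Since the matrix units form a homogeneous $F$-basis of $M_n(F)$, the decomposition $M_n(F)=\bigoplus_{g\in G}A_g$ coincides on basis vectors, hence as a whole, with the decomposition $\bigoplus_{g\in G}R_g$ defined by this $n$-tuple. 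This finishes the converse.

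There is no real obstacle here; the argument is essentially bookkeeping with the multiplicative relations among the $e_{ij}$. The only point to be careful about is that every $e_{ij}$ is nonzero, so that $\deg(e_{ij})$ is a well-defined element of $G$, and that the choice of a ``reference row'' (here row $1$) only affects the resulting tuple $(g_1,\dots,g_n)$ up to a global left translation by an element of $G$, which does not change the elementary grading it defines.
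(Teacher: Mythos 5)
Your proof is correct. Note that the paper itself offers no proof of this statement: it is simply recalled as a known result from \cite{Dascalescu}, so there is no in-paper argument to compare with; your degree bookkeeping ($\deg(e_{ii})=e$ from $e_{ii}^2=e_{ii}\neq 0$, $\deg(e_{ji})=\deg(e_{ij})^{-1}$ from $e_{ij}e_{ji}=e_{ii}$, $\deg(e_{ij})=\deg(e_{i1})\deg(e_{1j})$, then $g_i:=\deg(e_{1i})$) is precisely the standard argument behind that citation, and your final identification step is legitimate because a basis of homogeneous elements forces each component $A_g$ to be the span of the matrix units of degree $g$.
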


An \emph{involution} on an algebra $A$ is an antiautomorphism of the
order two, that is, a linear map $^*:A\longrightarrow A$ satisfying
$(ab)^*=b^*a^*$ and $(a^*)^*=a$, for all $a,b\in A$. A classic
example of involution on $M_{n}(F)$ is the transpose map.

A $G$-graded algebra $A = \bigoplus_{g \in G} A_{g}$ with involution
$^*$ is called a \emph{degree-inverting involution algebra} if
$(A_{g})^{*} = A_{g^{-1}}$ for all $g \in G$. In this case, we say that $*$ is a degree-inverting involution on $A$. In this paper, if $A$ is a degree-inverting involution algebra, we say it is a
$(G,*)$-algebra. A typical example of a $(G,*)$-algebra is
$M_{n}(F)$ endowed with an elementary grading and with the transpose
involution. The degree-inverting involutions on $M_n(F)$ have been
described by the authors in \cite{FonsecaMello}.

\begin{remark}
When dealing with identities with involution on algebras over fields of characteristic different from 2, one usually consider the decomposition $A = A^{+}\oplus A^{-}$, where $A^{+} = \{a\in A\,|\, a^*=a \}$ (symmetric component) and $A^{-} = \{a\in A\,|\, a^*=-a \}$ (skew-symmetric component) and set in the free algebra, the set of symmetric and skew-symmetric variables. Note that one cannot use this approach in the present case, since the symmetric and skew-symmetric components are no longer homogeneous. In order to deal with our case, we need to consider a free algebra where the grading and the involution behave in the same way as in the algebra we want to study its identities. 
\end{remark}

\subsection{The free $(G,*)$-algebra and $(G,*)$-identities}

To describe the identities of $M_n(F)$ as a $(G,*)$-algebra, we
define what we call the \emph{free $(G,*)$-algebra}.

For each $g\in G$, we define two countable sets $X_{g} =
\{x_{k,g}\,;\, k\in \mathbb{N}\}$ and $X_{g}^{*} =
\{x_{k,g}^{*}\,;\,k\in \mathbb{N}\}$. Then, let $X=\cup_{g\in G}X_g$
and $X^*=\cup_{g\in G}X_g^*$. Consider the free associative algebra $F\langle X\cup X^*\rangle$, which is freely generated by $X\cup X^*$. Of course, it is an algebra with an involution defined in a natural way. Now, we define a $G$-grading on this free algebra to make it a $(G,*)$-algebra. Let $\deg(1)=e$, and for each $k\in \mathbb{N}$ and $g\in G$, let $\deg(x_{k,g})=g$ and $\deg(x_{k,g}^*)=g^{-1}$. If $m= x_{i_{1},g_{1}}^{\varepsilon_{1}}\cdots
x_{i_{l},g_{l}}^{\varepsilon_{l}}$ is a monomial in $F\langle X\cup X^*\rangle$, where $\varepsilon_i$ is $*$ or nothing, we define $\deg(m)=\deg(x_{i_{1},g_{1}}^{\varepsilon_{1}})\cdots
\deg(x_{i_{l},g_{l}}^{\varepsilon_{l}})$.

If we define \[(F\langle X\cup X^{*}\rangle)_{g} = \text{span}_{F}\{m=x_{i_{1},g_{1}}^{\varepsilon_{1}}\cdots x_{i_{l},g_{l}}^{\varepsilon_{l}}\,|\,
\deg(m) = g\},\]
we obtain that $F\langle X\cup X^{*}\rangle = \bigoplus_{g \in G} (F\langle X\cup X^{*}\rangle)_{g}$ is a $G$-grading on the algebra $F\langle X\cup X^{*}\rangle$, which makes it a $(G,*)$-algebra. We denote such algebra by $F\langle X\,|\, (G,*)\rangle$ and call it the free $(G,*)$-algebra. The elements of $F\langle X\,|\, (G,*)\rangle$ are called $(G,*)$-polynomials. 
 
If $A$ and $B$ are $G$-graded algebras with involution, we say that a homomorphism $\phi : A \longrightarrow B$ is a homomorphism of graded algebras with involutions, if $\phi (A_g) \subset B_{g}$, for all $g\in G$ and $\phi(x^*)=\phi(x)^*$, for all $x\in A$.

The algebra $F\langle X\,|\, (G,*)\rangle$ satisfies a universal property: for any $(G,*)$-algebra $A$ and for any map $\varphi:X\longrightarrow A$ such that for all $g\in G$, $\varphi(X_g)\subseteq A_g$, there exists a unique homomorphism of graded algebras with involution $\phi:F\langle X\,|\, (G,*)\rangle \longrightarrow A$, such that for all $x\in X$, $\phi(x)=\varphi(x)$.

Let $A$ be $(G,*)$-algebra. A polynomial $f \in F\langle X\, |\,
(G,*)\rangle$ is called a $(G,*)$-polynomial identity of $A$ if $f \in Ker(\phi)$ for any homomorphism of graded algebra with involution $\phi: F\langle X\,|\,(G,*)\rangle \rightarrow A$. Equivalently, $f$ vanishes under any admissible substitution of variables by the elements of $A$ with the condition that if $x_{k,g}$ is substituted by $a\in A_{g}$, then $x_{k,g}^*$ is substituted by $a^*$.

We observe that if $A$ is a $(G,*)$-algebra, then it is a graded
algebra, and if $f$ is a graded polynomial identity of $A$, then it is also a $(G,*)$-identity of $A$. In particular Proposition 4.1 of \cite{Diogo} also holds for $(G,*)$-algebras.

\begin{proposition}\label{elementary}
    Let $G$ be a group and let $\overline{g} = (g_{1},\ldots,g_{n}) \in
    G^{n}$ be an $n$-tuple of elements from $G$. Suppose $M_{n}(F)$
    is endowed with elementary grading induced by $\overline{g}$. The
    following assertions are equivalent
    \begin{enumerate}
        \item The neutral component of $M_{n}(F)$ coincides with the main
        diagonal.
        \item $x_{1,e}x_{2,e} - x_{2,e}x_{1,e}$ is a graded identity of
        $M_{n}(F)$.
        \item The elements of $\overline{g}$ are pairwise distinct.
    \end{enumerate}
\end{proposition}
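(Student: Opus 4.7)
My plan is to prove the chain of implications $(3)\Rightarrow(1)\Rightarrow(2)\Rightarrow(3)$ (or equivalently to show $(1)\Leftrightarrow(3)$ directly and then $(1)\Leftrightarrow(2)$), using only the description of the homogeneous components of an elementary grading given in the preliminaries.

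First I would handle the equivalence $(1)\Leftrightarrow(3)$ by unpacking definitions. Since $R_{e}$ is spanned by the matrix units $e_{ij}$ with $g_{i}^{-1}g_{j}=e$, i.e., $g_{i}=g_{j}$, the neutral component coincides with the span of the diagonal units $e_{ii}$ if and only if the only pairs $(i,j)$ with $g_{i}=g_{j}$ are those with $i=j$; this is exactly the assertion that the entries of $\overline{g}$ are pairwise distinct.

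Next I would observe that $(1)\Rightarrow(2)$ is immediate: if the neutral component equals the diagonal, then it is a commutative subalgebra of $M_{n}(F)$, and any admissible substitution of $x_{1,e},x_{2,e}$ by homogeneous elements of degree $e$ produces a pair of commuting diagonal matrices, so the commutator vanishes.

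Finally I would establish $(2)\Rightarrow(1)$ by the contrapositive. Assume that (1) fails; then by the equivalence of (1) and (3) there exist indices $i\neq j$ with $g_{i}=g_{j}$, and hence $e_{ij}\in R_{e}$. Substitute $x_{1,e}\mapsto e_{ii}$ and $x_{2,e}\mapsto e_{ij}$ — both of degree $e$ — and compute $e_{ii}e_{ij}-e_{ij}e_{ii}=e_{ij}-0=e_{ij}\neq 0$, showing that $x_{1,e}x_{2,e}-x_{2,e}x_{1,e}$ is not a graded identity.

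The proof is essentially a bookkeeping exercise on the definitions, so there is no real obstacle; the only subtlety worth being careful about is producing, in the last step, a pair of non-commuting elements inside $R_{e}$, which is handled by pairing an off-diagonal unit $e_{ij}$ with the idempotent $e_{ii}$.
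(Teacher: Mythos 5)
Your proof is correct: the identification of $R_e$ as the span of the $e_{ij}$ with $g_i=g_j$, the commutativity of the diagonal for $(1)\Rightarrow(2)$, and the substitution $x_{1,e}\mapsto e_{ii}$, $x_{2,e}\mapsto e_{ij}$ giving $e_{ij}\neq 0$ for the contrapositive of $(2)\Rightarrow(1)$ are all sound. The paper itself does not prove this proposition but only cites it (Proposition 4.1 of \cite{Diogo}), and your argument is exactly the standard bookkeeping verification one would expect there, so there is nothing to add.
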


A (two-sided) ideal $I \subset F\langle X \,|\,(G,*) \rangle$ is
called a $T_{G}^{*}$-ideal if $I$ is closed under all $(G,*)$-endomorphism of $F\langle X \,|\,(G,*)
\rangle $. We denote the set of all $(G,*)$-identities of $A$ by
$T_{G}^{*}(A)$. Let $S \subset F\langle X\,|\,(G,*)\rangle$. We
denote the intersection of all $T_{G}^{*}$-ideals containing $S$ by $\langle S \rangle_{T_{G}^{*}}$. Notice that $T_{G}^{*}(A)$ and $\langle S \rangle_{T_{G}^{*}}$ are $T_{G}^{*}$-ideals of $F\langle X\,|\,(G,*)\rangle$. We say that $S \subset F\langle X\,|\,(G,*)\rangle$ is a basis for the $(G,*)$-identities of $A$ if $T_{G}^{*}(A) = \langle S \rangle_{T_{G}^{*}}$.

\begin{proposition}\label{ids}
    Let $G$ be a group and let $M_{n}(F)$ be endowed with the elementary grading induced by
    an $n$-tuple $(g_{1},\ldots,g_{n})$ of pairwise distinct elements from $G$,
    and with the transpose involution. The following polynomials are
    $(G,*)$-identities for $M_{n}(F)$
    \begin{eqnarray}
        x_{1,e}x_{2,e} - x_{2,e}x_{1,e}\\
        x_{1,e} - x_{1,e}^{*}\\
        x_{1,g}, g\not\in Supp(M_n(F))\\
        x_{1,g}x_{2,g^{-1}}x_{3,g} - x_{3,g}x_{2,g^{-1}}x_{1,g}, g \neq e
    \end{eqnarray}
\end{proposition}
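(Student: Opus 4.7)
The plan is to verify each of the four polynomials as a $(G,*)$-identity of $M_n(F)$ separately, using Proposition~\ref{elementary} for the key structural input: since the entries of $(g_1,\ldots,g_n)$ are pairwise distinct, the neutral component of the grading is exactly the algebra of diagonal matrices.

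Identities (1), (2), and (3) will then follow almost immediately. For (1), Proposition~\ref{elementary} itself asserts that $x_{1,e}x_{2,e} - x_{2,e}x_{1,e}$ is a graded identity, and the paragraph preceding it records that any graded identity is automatically a $(G,*)$-identity. For (2), every diagonal matrix is fixed by the transpose, so $x_{1,e} - x_{1,e}^*$ vanishes under any admissible substitution of $x_{1,e}$ by an element of $(M_n(F))_e$. For (3), if $g \notin \text{Supp}(M_n(F))$, then $(M_n(F))_g = 0$, so the only admissible substitution for $x_{1,g}$ is $0$.

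The substantive verification is (4). I would first describe $(M_n(F))_g$ for $g \neq e$ concretely: it is spanned by the matrix units $e_{ij}$ with $g_j = g_i g$, and since the $g_i$ are pairwise distinct, for each row index $i$ there is at most one admissible column index $j$, and symmetrically for each column index $j$. Thus every element of $(M_n(F))_g$ has at most one nonzero entry in each row and each column, and the same holds for $(M_n(F))_{g^{-1}}$. I would then expand a product $abc$ with $a,c \in (M_n(F))_g$ and $b \in (M_n(F))_{g^{-1}}$ in the matrix-unit basis. The inverse degree of the middle factor forces the indices to ``bounce back'': from $g_j = g_i g$ and $g_q = g_j g^{-1}$ one obtains $g_q = g_i$, hence $q = i$ by distinctness, and then the third factor's constraint $g_s = g_q g = g_i g$ forces $s = j$. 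The product therefore collapses to $\sum a_{ij} b_{ji} c_{ij}\, e_{ij}$, summed over pairs $(i,j)$ with $g_j = g_i g$. The analogous calculation for $cba$ yields $\sum c_{ij} b_{ji} a_{ij}\, e_{ij}$ over the same index set, and commutativity of $F$ gives $abc = cba$.

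The main (mild) obstacle is the bookkeeping in step (4); the one non-routine observation is the ``bounce-back'' of indices caused by placing the inverse-degree factor in the middle, which is what reduces the support of $abc$ and $cba$ to the same, very restricted, set. Once that is isolated, the entire proposition reduces to a short matrix computation.
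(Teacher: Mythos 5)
Your argument is correct, but it takes a different route from the paper. The paper does not verify these polynomials by direct computation: it cites Bahturin--Drensky (Lemma 4.1 of that paper) for identities (1) and (4) as graded identities of an elementary grading whose neutral component is the diagonal, invokes the observation that every graded identity is automatically a $(G,*)$-identity, derives (2) from Proposition~\ref{elementary} exactly as you do, and then notes (via Remark 2 of Theorem 8 in Haile--Natapov) that (4) is in fact a consequence of (2) inside the $T_G^*$-ideal. Your proposal instead gives a self-contained verification: (1)--(3) by the same structural observations, and (4) by expanding $abc$ and $cba$ in matrix units, where the ``bounce-back'' forced by the middle factor of degree $g^{-1}$ (from $g_j=g_ig$ and $g_q=g_jg^{-1}$ one gets $q=i$ by distinctness, then $s=j$) collapses both products to $\sum a_{ij}b_{ji}c_{ij}e_{ij}$ over pairs with $g_j=g_ig$; this computation is correct, and since no starred variables occur in (4) it suffices to check it as a graded identity. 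What your approach buys is independence from the cited literature; what the paper's approach buys is brevity and, more importantly, the structural fact that (4) is redundant modulo (1) and (2), which is consistent with the later sections where the ideal $J$ (and the generating set in the main theorem) omit (4). Your proof establishes the proposition but does not record that redundancy.
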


For more details about identities 1 and 4, see \cite[Lemma
4.1]{BahturinDrensky}. Identity 2 follows from Proposition
\ref{elementary}. Also, \cite[Remark 2 of Theorem 8]{Haile} shows
that identity 4 follows from identity 2.

When dealing with ordinary polynomials, it is well known that each
T-ideal is generated by its multi-homogeneous polynomials. In the
case of $(G,*)$-polynomials, we need to slight modify this concept.

\begin{definition}
    Let $f=f(x_{1,g_{1}},\ldots,x_{n,g_{n}},x_{1,g_{1}}^{*},\ldots,x_{n,g_{n}}^{*})\in F\langle X\,|\,(G,*)\rangle$. Write $f$ as
    \[f=\sum_{l=1}^{k} \lambda_{l}m_{l}\]
    where $\lambda_{l} \in F - \{0\}$ and $m_l$ are monomials in $F\langle X\,|\,(G,*)\rangle$.
    The polynomial $f$ is called \textit{strongly multi-homogeneous} if for each $t \in \{1,\ldots,m\}$,
    $\deg_{x_{t,g_{t}}} m_{i} + \deg_{x_{t,g_{t}}^{*}} m_{i} =
    \deg_{x_{t,g_{t}}} m_{j} + \deg_{x_{t,g_{t}}^{*}} m_{j}$ for all $i, j
    \in \{1,\ldots,k\}$. Here, the symbol $\deg_{x} m_{i}$
    denotes the number of times the variable $x$ appears in the monomial $m_{i}$.
\end{definition}

Following the classic Vandermonde argument, we can prove that if $I$
is a $(G,*)$-ideal, then $I$ is generated by its strongly
multi-homogeneous polynomials.

\section{The $*$-graded identities of $M_{n}(F)$}

We start this section with the following theorem of \cite{Haile}, which we aim to generalize for infinite fields and for a broader class of gradings by adding the identities $x_g=0$ for $g\not \in Supp(M_n(F))$. We observe that in \cite{Haile} the authors used the graph theory to prove this result.

\begin{theorem}[Haile-Natapov, Theorem 8, \cite{Haile}]

    Let $G = \{g_{1},\ldots,g_{n}\}$ be a group of order $n$. The ideal of $(G,*)$-identities of $M_{n}(\mathbb{C})$ endowed with the elementary grading induced by $(g_{1},\ldots,g_{n})$ and with the transpose involution is generated as a $T_{G}^{*}$-ideal by the following elements
    \begin{enumerate}
        \item $x_{i,e}x_{j,e} - x_{j,e}x_{i,e}$
        \item $x_{i,e} - x_{i,e}^{*}$
    \end{enumerate}
\end{theorem}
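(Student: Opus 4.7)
The goal is to prove that the $T_G^*$-ideal $I$ generated by identities (1) and (2) equals $T_G^*(M_n(\mathbb{C}))$. The inclusion $I \subseteq T_G^*(M_n(\mathbb{C}))$ is immediate from Proposition \ref{ids}, noting that identities (3) are vacuous in the crossed-product setting (where $\mathrm{Supp}(M_n(\mathbb{C})) = G$) and that identity (4) is a consequence of (2) as remarked after Proposition \ref{ids}.

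For the reverse inclusion, I propose to argue via a canonical form plus generic matrices. By the Vandermonde argument recalled just before the theorem, $I$ is determined by its strongly multi-homogeneous elements, so it suffices to show that every strongly multi-homogeneous polynomial in $T_G^*(M_n(\mathbb{C}))$ lies in $I$; I may further restrict to graded-homogeneous components. Now, modulo $I$, identity (2) removes every star attached to a neutral variable, and identity (1) sorts any maximal block of consecutive neutral variables into a prescribed order. Therefore every monomial reduces modulo $I$ to a canonical form
\[
p_0\, z_1\, p_1\, z_2\, \cdots\, z_l\, p_l,
\]
where each $z_i = x_{k_i, h_i}^{\varepsilon_i}$ is a non-neutral letter ($h_i \neq e$, $\varepsilon_i \in \{1, *\}$) and each $p_s$ is a sorted commutative monomial in neutral variables. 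Two canonical forms coincide if and only if the sequences $(z_1, \ldots, z_l)$ agree and the multisets inside each $p_s$ agree.

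To show these canonical forms are $F$-linearly independent modulo $T_G^*(M_n(\mathbb{C}))$, I introduce generic matrices. For each $(k, g)$, define
\[
Y_{k,g} = \sum_{i=1}^{n} y_{k,g}^{(i)}\, e_{i, \sigma_g(i)},
\]
where $\sigma_g$ is the permutation of $\{1, \ldots, n\}$ determined by $g_{\sigma_g(i)} = g_i g$ (well-defined since $G$ acts on itself by right multiplication) and the $y_{k,g}^{(i)}$ are independent commuting indeterminates over $F$. Then $Y_{k,g}^*$ has degree $g^{-1}$, and $Y_{k,e}$ is a diagonal matrix with pairwise distinct entries (so $Y_{k,e} = Y_{k,e}^*$, consistent with identity (2)). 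The $(G,*)$-subalgebra $U$ they generate satisfies $T_G^*(U) = T_G^*(M_n(\mathbb{C}))$ by a standard argument using automorphisms of the polynomial ring that permute the indeterminates. Evaluating a canonical monomial $m = p_0 z_1 p_1 \cdots z_l p_l$ on these generics, the product $z_1 \cdots z_l$ is a generalized permutation matrix, and inserting a block $p_s$ multiplies the $(r, \cdot)$-entry by a product of diagonal indeterminates $y_{k,e}^{(i_s(r))}$, where $i_s(r)$ is the row reached after the first $s$ non-neutral letters; since all indeterminates are algebraically independent, these monomial entries are visibly distinct.

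The main obstacle is precisely the verification that distinct canonical monomials produce distinct matrix entries under generic evaluation. One must rule out accidental coincidences: two different orderings of the non-neutral letters could \emph{a priori} yield the same composite permutation, and two different distributions of neutral variables into blocks could collide once variable names are identified. The first difficulty is resolved by the uniqueness of each triple $(k_i, h_i, \varepsilon_i)$ as reflected in its dedicated indeterminate $y_{k_i, h_i}^{(\cdot)}$; the second, by the pairwise distinctness of the diagonal entries of $Y_{k,e}$, which allows the row index $i_s(r)$ at each stage of the path to be read off from the monomial entry. Once this is in place, any relation $f = \sum_\alpha \lambda_\alpha m_\alpha \in T_G^*(M_n(\mathbb{C}))$ in canonical form forces all $\lambda_\alpha = 0$, yielding $f \in I$ and completing the proof.
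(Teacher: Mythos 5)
Your strategy breaks at the linear-independence step, and the failure is not a technicality. The canonical form you extract from identities (1) and (2) alone (delete stars on neutral letters, sort consecutive neutral blocks) is \emph{not} a system of representatives for congruence modulo $T_{G}^{*}(M_{n}(\mathbb{C}))$: since $x_{1,e}-x_{1,e}^{*}$ generates a $T_{G}^{*}$-ideal, it applies to every \emph{subword of neutral degree}, not only to single neutral variables, and this lets one permute the non-neutral letters in highly nontrivial ways. Concretely, for $g\neq e$ the two monomials $x_{1,g}x_{2,g^{-1}}x_{3,g}$ and $x_{3,g}x_{2,g^{-1}}x_{1,g}$ are distinct canonical forms in your sense (no neutral letters at all, same sequence of variables in different order), yet their difference is a $(G,*)$-identity of $M_{n}(\mathbb{C})$ --- it is identity (4) of Proposition \ref{ids}, which is a consequence of (2). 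Your own generic matrices confirm this: $Y_{1,g}Y_{2,g^{-1}}Y_{3,g}=Y_{3,g}Y_{2,g^{-1}}Y_{1,g}$ as matrices over the commutative polynomial ring, because each diagonal-position entry is the same product of commuting indeterminates in a different order. Hence the claim that distinct canonical monomials have ``visibly distinct'' entries is false, the canonical forms are not linearly independent modulo $T_{G}^{*}(M_{n}(\mathbb{C}))$, and the proof cannot be closed along these lines. (The parts of your proposal that are fine: the easy inclusion, the reduction to strongly multi-homogeneous polynomials, the observation that in this crossed-product setting $Supp(M_n(\mathbb{C}))=G$ so identities (3) and monomial identities play no role, and the construction of the generic $(G,*)$-matrices, which coincide with the $A_{j,g}$ of Definition \ref{generic}.)

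What is missing is exactly the hard content of the theorem, which runs in the opposite direction from your independence claim: one must show that whenever two monomials, evaluated on the generic matrices, have the same nonzero entry in the same position, they are congruent modulo the ideal $J$ generated by (1) and (2). This is Lemma \ref{transposicao2} of the paper (prepared by Lemmas \ref{lema0} and \ref{tecnico}), proved by an induction that repeatedly applies $x_{1,e}-x_{1,e}^{*}$ to carefully chosen neutral-degree subwords so as to bring a prescribed variable to the front (the case analysis Cases 1--4 and Subcases 3.1.1--3.2.4); Haile and Natapov achieve the same thing by a graph-theoretic argument. With that lemma in hand the endgame is also different from yours: take a strongly multi-homogeneous identity $f=\sum_i\lambda_i m_i$ whose monomials are not identities and whose number of summands is minimal, evaluate on generics, note that $m_1$ must share a nonzero entry in the same position with some $m_j$, conclude $m_1\equiv m_j \bmod J$, and contradict minimality --- no basis of canonical forms is needed (and, as the counterexample shows, no basis of the simple shape $p_0z_1p_1\cdots z_lp_l$ exists). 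To repair your argument you would have to either prove the analogue of Lemma \ref{transposicao2} or refine the canonical form until it is a genuine invariant of the congruence class, which amounts to the same combinatorial work you have skipped.
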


From now on, we consider $M_{n}(F)$ endowed with elementary grading
induced by the $n$-tuple $\overline{g} = (g_{1},\ldots,g_{n}) \in G^{n}$ of pairwise distinct elements of $G$, and we denote $G_0=Supp(M_n(F))$.

Let $g\in G$. We define
\[D(g) = \{i \in \{1,\dots, n\}\,|\, g_{i}g \in \{g_1,\dots,g_n\}\}\]
and
\[Im(g) = \{j \in \{1,\dots, n\}\,|\,g_{j}g^{-1} \in \{g_1,\dots,g_n\}\}.\]

Notice that $|D(g)| = |Im(g)|$, $D(g^{-1}) = Im(g)$ and $D(g) =
\emptyset$ if and only if $g \notin G_0$. In that case, if $i\in
D(g)$, there exists a unique $j\in \{1,\dots, n\}$ such that
$g_ig=g_j$. If we define $j=\widehat{g}(i)$, we obtain a bijective
map
\[\begin{array}{cccc}
\widehat{g}:& D(g)& \longrightarrow & Im(g) \\
& i   & \longmapsto \   & \widehat{g}(i)\\
\end{array}\]
Observe that for each $i\in \{1,\dots,n\}$, we have
$e_{i\widehat{g}(i)} \in (M_{n}(F))_{g}$ and for each $g$ in support
of $M_n(F)$, $\widehat{g^{-1}} = (\widehat{g})^{-1}$.

\begin{lemma}\label{g=h}
Let $g,h \in G$. If there exists $i \in D(g)\cap D(h)$ such that
\[\widehat{g}(i) = \widehat{h}(i),\]
then $g = h$.
\end{lemma}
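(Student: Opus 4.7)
The plan is to simply unwind the definitions and use left cancellation in the group $G$.

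First I would set $j = \widehat{g}(i) = \widehat{h}(i)$. By the definition of $\widehat{g}$, since $i \in D(g)$, the index $j$ is the unique element of $\{1,\ldots,n\}$ with $g_i g = g_j$. Likewise, since $i \in D(h)$ and $\widehat{h}(i) = j$, we have $g_i h = g_j$.

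From these two equations I get $g_i g = g_i h$, and left-multiplying by $g_i^{-1}$ yields $g = h$. This is the entire argument.

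There is no real obstacle here; the lemma is essentially a restatement that for each fixed $i$ the assignment $g \mapsto \widehat{g}(i)$ is injective on its domain (equivalently, that the maps $e_{i,\widehat{g}(i)}$ live in distinct homogeneous components for distinct $g$). The content is purely bookkeeping about the elementary grading, and it will be used later to guarantee that the homogeneous degree of a matrix unit is determined by its row and column.
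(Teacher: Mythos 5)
Your proof is correct and matches the paper's argument exactly: both unwind the definition of $\widehat{g}$ to get $g_i g = g_j = g_i h$ and cancel $g_i$ on the left. Nothing to add.
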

\proof Let $i\in D(\hat{g})\cap D(\hat{h})$. If
$j=\hat{g}(i)=\hat{h}(i)$, then $g_ig=g_j=g_ih$. We can conclude
that $g=h$.

\endproof

Let $\Omega = \{y_{i,\widehat{g}({i})}^{k}|g \in G, i \in D(g),  k
\in \mathbb{N}\}$ be a set of commuting variables and $F[\Omega]$ be
the algebra of commuting polynomials in $\Omega$. We denote the set
of all matrices over $F[\Omega]$ by $M_{n}(\Omega)$. As in the case
of matrices over $F$, if $\overline{g}=(g_1,\dots,g_n)$ is an
$n$-tuple of elements of $G$, then $M_{n}(\Omega)$ is endowed with
an elementary $G$-grading induced by $\overline{g}$.

\begin{definition}\label{generic}
    For each $g\in G_0$ and $j\in \mathbb{N}$, the elements of $M_{n}(\Omega)$,
    \[A_{j,g} = \sum\limits_{i \in D(g)} y_{i,\widehat{g}(i)}^{j}e_{i\widehat{g}(i)}\]
    and \[A_{j,g}^{*} = \sum\limits_{i \in D(g^{-1})} y_{\widehat{g}^{-1}(i),i}^{j}e_{i\widehat{g^{-1}}(i)}\]
    are called generic $(G,*)$-matrices. The subalgebra of $M_n(\Omega)$ generated by $\{A_{j,g}, A_{j,g}^*\,|\, g\in G_0, j\in \mathbb{N}\}$ is called the algebra of generic $(G,*)$-matrices and we denote it by $Gen$.
\end{definition}

\begin{lemma}\label{lema-1}
Let $g,h\in G_0$. If $y_{i,k}^{j}\in \Omega$ is an entry of the matrices $A_{j,g}$ and $A_{j,h}$, then $g = h$.
\end{lemma}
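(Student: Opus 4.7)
The plan is to derive this immediately from the preceding Lemma \ref{g=h} by unpacking the definition of the generic matrices.

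First I would observe what it means for $y_{i,k}^{j}$ to appear as an entry of $A_{j,g}$. By Definition \ref{generic},
\[A_{j,g} = \sum_{i' \in D(g)} y_{i',\widehat{g}(i')}^{j}\, e_{i'\widehat{g}(i')},\]
so the variables occurring in $A_{j,g}$ are precisely the $y_{i',\widehat{g}(i')}^{j}$ for $i' \in D(g)$, each appearing in exactly the position $(i',\widehat{g}(i'))$. Hence if $y_{i,k}^{j}$ is an entry of $A_{j,g}$, we must have $i \in D(g)$ and $k = \widehat{g}(i)$. Identically, if $y_{i,k}^{j}$ is an entry of $A_{j,h}$, then $i \in D(h)$ and $k = \widehat{h}(i)$.

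Combining these, $i \in D(g) \cap D(h)$ and $\widehat{g}(i) = \widehat{h}(i) = k$. Lemma \ref{g=h} applies and yields $g = h$.

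There is no real obstacle here, since the labeling of the variables was designed to make the position $(i,k)$ of $y_{i,k}^{j}$ uniquely encode the element $g \in G_{0}$ via the bijection $\widehat{g}$. The only subtlety worth being explicit about is that the definition of $A_{j,g}$ uses the same variable index $i$ for the row as the subscript of $y$, which is why ``$y_{i,k}^{j}$ is an entry of $A_{j,g}$'' forces the row to be $i$ rather than some other index. This paragraph could be made a remark, but a single sentence citing Lemma \ref{g=h} after the unpacking is enough.
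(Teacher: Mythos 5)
Your proof is correct and is essentially the paper's own argument: unpack the definition of $A_{j,g}$ to get $i \in D(g)\cap D(h)$ with $k=\widehat{g}(i)=\widehat{h}(i)$, then apply Lemma \ref{g=h}. The extra detail you supply is fine but not a different route.
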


\proof
If $y_{i,k}^{j}$ is an entry of $A_{j,g}$  and of $A_{j,g}$  then $k=\widehat{g}(i)$ and $k=\widehat{h}(i)$. Now Lemma \ref{g=h} implies that $g=h$.
\endproof

Using classical arguments, we can prove the following proposition.

\begin{proposition}
    The relatively free algebra $F\langle X\,|\,(G,*) \rangle / T_G^*(M_n(F))$ is isomorphic to
    $Gen$. Furthermore, $T_G^*(M_n(F))=T_G^*(Gen)$.
\end{proposition}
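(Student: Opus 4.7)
The plan is to use the universal property of $F\langle X\,|\,(G,*)\rangle$ to define a $(G,*)$-homomorphism $\phi$ onto $Gen$ and to identify its kernel as $T_G^*(M_n(F))$. The assignment $x_{j,g}\mapsto A_{j,g}$ respects the grading since $A_{j,g}\in (M_n(\Omega))_g$, so the universal property yields a unique $(G,*)$-homomorphism $\phi:F\langle X\,|\,(G,*)\rangle\to M_n(\Omega)$ which in particular sends $x_{j,g}^*$ to $(A_{j,g})^t$. A short computation re-indexing the defining sum of $A_{j,g}^*$ via $k=\widehat{g}^{-1}(i)$, using the identity $\widehat{g^{-1}}=(\widehat{g})^{-1}$, shows $(A_{j,g})^t=A_{j,g}^*$; hence the image of $\phi$ contains every generic $(G,*)$-matrix and equals $Gen$.

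For $T_G^*(M_n(F))\subseteq\ker\phi$: if $f\in T_G^*(M_n(F))$, then each entry of $\phi(f)\in M_n(F[\Omega])$ is a polynomial in the commuting variables $y^j_{i,\widehat{g}(i)}$. Every specialization of these variables to elements of $F$ turns the generic matrices into honest matrices $B_{j,g}\in (M_n(F))_g$ with $B_{j,g}^*=B_{j,g}^t$, giving an admissible substitution on which $f$ vanishes. Since $F$ is infinite, the standard Vandermonde density argument forces each entry polynomial of $\phi(f)$ to be zero, i.e.\ $\phi(f)=0$. For the reverse inclusion $\ker\phi\subseteq T_G^*(M_n(F))$, the crucial point is that because $g_1,\dots,g_n$ are pairwise distinct, $(M_n(F))_g$ is spanned precisely by $\{e_{i,\widehat{g}(i)}\mid i\in D(g)\}$, so every homogeneous element $B\in (M_n(F))_g$ has the form $\sum_{i\in D(g)}c_ie_{i,\widehat{g}(i)}$, matching exactly the shape of $A_{j,g}$. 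Consequently any admissible substitution $x_{j,g}\mapsto B_{j,g}$ in $M_n(F)$ is realized by specializing $y^j_{i,\widehat{g}(i)}\mapsto c_i$, and $\phi(f)=0$ yields $f=0$ on that substitution.

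The first isomorphism theorem then gives $F\langle X\,|\,(G,*)\rangle/T_G^*(M_n(F))\cong Gen$. For the ``furthermore'', the inclusion $T_G^*(Gen)\subseteq T_G^*(M_n(F))$ is immediate (the generic matrices themselves form an admissible substitution in $Gen$, so any $f\in T_G^*(Gen)$ satisfies $\phi(f)=0$ and then $f\in T_G^*(M_n(F))$ by the previous paragraph), while the reverse inclusion follows by applying the same density argument to an arbitrary admissible substitution in $Gen\subseteq M_n(F[\Omega])$, whose entries are polynomials in the $y$'s that we specialize to $F$. The only real technical point is the combinatorial bookkeeping behind $(A_{j,g})^t=A_{j,g}^*$ and the shape-matching between $(M_n(F))_g$ and $A_{j,g}$; both depend essentially on the pairwise distinctness of $\overline{g}$ that is standing throughout this section.
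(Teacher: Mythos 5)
Your argument is correct and is exactly the classical generic-matrix argument the paper has in mind (the paper states this proposition with only the remark that it follows ``using classical arguments'' and gives no written proof): the universal property produces $\phi$ onto $Gen$ with $(A_{j,g})^{t}=A_{j,g}^{*}$, and the infinite-field specialization argument identifies $\ker\phi$ with $T_G^*(M_n(F))$, from which both assertions follow. The only point you leave implicit --- that independently chosen admissible substitutions in $M_n(F)$ can all be realized by a single specialization of the commuting variables, which requires that distinct generic matrices involve pairwise disjoint sets of variables $y^j_{i,\widehat{g}(i)}$ --- is precisely the content of Lemma \ref{lema-1}, and your ``shape-matching'' remark together with that lemma closes the gap.
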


We now define the following maps, which by an abuse of notation will be also denoted by $*$ 
\[\begin{array}{l}

\begin{array}{llll}
*: &  G & \longrightarrow & G \\ & g & \longmapsto & g^*=g^{-1}\\
\end{array}
\\
\\
\begin{array}{llll} 
*: &\Omega & \longrightarrow & \Omega \\ & y_{k \widehat{g}(k)} & \longmapsto & {y_{k \widehat{g}(k)}}^*=y_{\widehat{g}^{-1}(k) k}\\
\end{array}
\end{array}\]

Given $h_1^{\varepsilon_1}, h_2^{\varepsilon_2}, \dots, h_r^{\varepsilon_r} \in G_0$, where $h_i\in G$ and $\varepsilon_i$ is $*$ or nothing, we consider the composition $\nu = \widehat{h_r^{\varepsilon_r}}\cdots\widehat{h_1^{\varepsilon_1}}$ of the corresponding functions. This may not be well defined, and we will prove in Lemma \ref{line} that in this case the monomial $x_{1,h_1}^{\varepsilon_1}\cdots x_{r,h_r}^{\varepsilon_r}$ is a graded identity for $M_n(F)$.
Otherwise, its domain $D_{\nu}=D_{\widehat{h_r^{\varepsilon_r}}\cdots\widehat{h_1^{\varepsilon_1}}}$ is the set of $i\in\{1,\dots,n\}$ for which the image $\widehat{h_r^{\varepsilon_r}}(\dots(\widehat{h_1^{\varepsilon_1}}(i))\dots)$ is well defined. 
\begin{lemma}\label{gh}
    Let $g,h \in G$, then $D(\widehat{h}\widehat{g})\subseteq D(\widehat{gh})$. Moreover, if $i\in D(\widehat{h}\widehat{g})$, then $\widehat{h}\widehat{g}(i)=\widehat{gh}(i)$.
\end{lemma}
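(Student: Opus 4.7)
The plan is to unwind the definitions of $\widehat{g}$, $\widehat{h}$, and $\widehat{gh}$ and invoke associativity in $G$, together with the pairwise distinctness of the $g_i$ (which makes $\widehat{g}$ well defined as a function).

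First I would fix $i \in D(\widehat{h}\widehat{g})$. By the description of the domain of a composition given just before the lemma, this means two things: $i \in D(\widehat{g}) = D(g)$, and $\widehat{g}(i) \in D(\widehat{h}) = D(h)$. Writing $j = \widehat{g}(i)$ and $k = \widehat{h}(j)$, the definition of $\widehat{g}$ gives $g_i g = g_j$, and the definition of $\widehat{h}$ gives $g_j h = g_k$.

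Next I would simply compute, using associativity in $G$:
\[
g_i (gh) = (g_i g) h = g_j h = g_k \in \{g_1,\ldots,g_n\}.
\]
This shows $gh \in \mathrm{Supp}(M_n(F))$ in an appropriate sense and, more precisely, that $i \in D(gh) = D(\widehat{gh})$. Moreover, from the defining equation $g_i(gh) = g_k$ we read off that $\widehat{gh}(i) = k = \widehat{h}(\widehat{g}(i))$, giving both the inclusion $D(\widehat{h}\widehat{g}) \subseteq D(\widehat{gh})$ and the equality of the two maps on that domain.

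There is really no significant obstacle here: the statement is a bookkeeping identity reflecting the fact that left multiplication by $g$ followed by left multiplication by $h$ equals left multiplication by $gh$ on the set $\{g_1,\ldots,g_n\}$. The only subtlety worth flagging is that the reverse inclusion $D(\widehat{gh}) \subseteq D(\widehat{h}\widehat{g})$ need not hold, because $i \in D(gh)$ only guarantees $g_i g h \in \{g_1,\ldots,g_n\}$, not that the intermediate element $g_i g$ lies in $\{g_1,\ldots,g_n\}$; this is why the lemma is stated only as an inclusion.
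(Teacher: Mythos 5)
Your proof is correct and follows essentially the same argument as the paper: unwind the definitions of $\widehat{g}$ and $\widehat{h}$ to get $g_ig=g_j$ and $g_jh=g_k$, then use associativity to conclude $g_i(gh)=g_k$, hence $i\in D(\widehat{gh})$ with $\widehat{gh}(i)=\widehat{h}(\widehat{g}(i))$. Your closing remark about the failure of the reverse inclusion is a correct and reasonable observation, though not needed for the statement.
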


\proof If $D(\widehat{h}\widehat{g})=\emptyset$, the result is
obvious. Suppose $D(\widehat{h}\widehat{g})\neq \emptyset$. If $i\in
D(\widehat{h}\widehat{g})$, let $k=\widehat{g}(i)$ and
$j=\widehat{h}(k)$. Then, $g_{k}=g_ig$ and $g_j=g_kh$, and we obtain
$g_j=g_i(gh)$, that is, $\widehat{gh}(i)=j$.
\endproof

\begin{lemma}\label{line}
    Let $h_1^{\varepsilon_1},h_2^{\varepsilon_2},\dots, h_r^{\varepsilon_r}\in G_0$.
    If $D_{\widehat{h_r^{\varepsilon_r}}\cdots\widehat{h_1^{\varepsilon_1}}}=\emptyset$ then
\begin{center}
    $A_{i_1,h_1}^{\varepsilon_1}A_{i_2,h_2}^{\varepsilon_2}\cdots A_{i_r,h_r}^{\varepsilon_r}=0$.
\end{center}
    Moreover, if the set $D_{\widehat{h_r^{\varepsilon_r}}\cdots\widehat{h_1^{\varepsilon_1}}}$ is nonempty then the $i$-th line
    of the matrix  $A_{i_1,h_1}^{\varepsilon_1}A_{i_2,h_2}^{\varepsilon_2}\cdots A_{i_r,h_r}^{\varepsilon_r}$ is nonzero if and
    only if $i\in D_{\widehat{h_r^{\varepsilon_r}}\cdots\widehat{h_1^{\varepsilon_1}}}$.
    In this case, if $j=\widehat{h_r^{\varepsilon_r}}\cdots \widehat{h_1^{\varepsilon_1}}(i)$,
    the only nonzero entry in the $i$-th line is a monomial of $\Omega$ in the $j$-th column.
\end{lemma}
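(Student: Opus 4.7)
The proof will be by induction on $r$, keeping track of the row structure of the partial products at each stage. The idea is that each generic matrix $A_{j,g}$ (or $A_{j,g}^{*}$) is already a ``monomial matrix'' in the sense that each row has at most one nonzero entry, that entry being a single variable from $\Omega$; multiplying two such matrices preserves this structure and the location of the nonzero entry in each row is governed exactly by the composition $\widehat{h}\widehat{g}$.

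The plan is as follows. For $r=1$, the statement is immediate from Definition \ref{generic}: in $A_{i_1,h_1}^{\varepsilon_1}$, the only nonzero entry in row $i$ is a single variable $y$ in column $\widehat{h_1^{\varepsilon_1}}(i)$, and this occurs precisely when $i\in D(h_1^{\varepsilon_1})=D_{\widehat{h_1^{\varepsilon_1}}}$. For the inductive step, set
\[
B=A_{i_1,h_1}^{\varepsilon_1}\cdots A_{i_{r-1},h_{r-1}}^{\varepsilon_{r-1}},\qquad C=A_{i_r,h_r}^{\varepsilon_r},
\]
and write $D_{r-1}=D_{\widehat{h_{r-1}^{\varepsilon_{r-1}}}\cdots\widehat{h_1^{\varepsilon_1}}}$. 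By the induction hypothesis, row $i$ of $B$ is nonzero iff $i\in D_{r-1}$, in which case the unique nonzero entry of that row is a monomial of $\Omega$ sitting in column $k=\widehat{h_{r-1}^{\varepsilon_{r-1}}}\cdots\widehat{h_1^{\varepsilon_1}}(i)$. Since each row of $C$ also has at most one nonzero entry (a single variable in column $\widehat{h_r^{\varepsilon_r}}(k)$ provided $k\in D(h_r^{\varepsilon_r})$), the row $i$ of the product $BC$ is the sum indexed over $k$ of (entry $(i,k)$ of $B$) times (row $k$ of $C$). Only one $k$ contributes, namely $k=\widehat{h_{r-1}^{\varepsilon_{r-1}}}\cdots\widehat{h_1^{\varepsilon_1}}(i)$, and this contribution is nonzero exactly when additionally $k\in D(h_r^{\varepsilon_r})$, i.e. when $i\in D_{\widehat{h_r^{\varepsilon_r}}\cdots\widehat{h_1^{\varepsilon_1}}}$.

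In that case the unique nonzero entry of row $i$ of $BC$ lies in column $\widehat{h_r^{\varepsilon_r}}(k)=\widehat{h_r^{\varepsilon_r}}\cdots\widehat{h_1^{\varepsilon_1}}(i)=j$, and it is the product of a monomial of $\Omega$ (the entry of $B$ at $(i,k)$) and a single variable $y\in\Omega$ (the entry of $C$ at $(k,j)$), hence itself a monomial of $\Omega$. When $D_{\widehat{h_r^{\varepsilon_r}}\cdots\widehat{h_1^{\varepsilon_1}}}=\emptyset$ this argument shows that no row is nonzero, so $BC=0$, giving the first assertion.

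The only subtle point is making sure the translation between the ``line description'' of the partial product and the domain of the composition of the $\widehat{\cdot}$ maps is correct; this is exactly what Lemma \ref{gh} is used for when $\varepsilon_i$ is trivial, and the analogous identity for $\widehat{g^{-1}}=(\widehat{g})^{-1}$ handles the case $\varepsilon_i=*$ (observed right after the definition of $\widehat{g}$). The bookkeeping on which variable of $\Omega$ appears in each entry follows directly from the formulas in Definition \ref{generic}. No genuine obstacle arises; the main care is simply to track indices and to use that each generic matrix has at most one nonzero entry per row so that the inductive ``monomial row'' invariant is preserved.
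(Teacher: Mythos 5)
Your proof is correct and follows essentially the same route as the paper's: induction on $r$, peeling off the last factor and using that each partial product has at most one nonzero entry per row (a monomial of $\Omega$) sitting in the column given by the composite map $\widehat{h_r^{\varepsilon_r}}\cdots\widehat{h_1^{\varepsilon_1}}$. (Your appeal to Lemma \ref{gh} is not actually needed, since the domain of the composition is defined directly before the lemma, but this side remark does not affect the argument.)
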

\begin{proof}
The proof is by induction on the length $r$ of the product. The
result for $r=1$ follows directly from Definition \ref{generic}.
Hence, we consider $r>1$ and assume the result for products of
length $r-1$. Let us consider the first case
$D_{\widehat{h_{r}^{\varepsilon_r}}\cdots\widehat{h_1^{\varepsilon_1}}}\neq\emptyset$.
In this case
$D_{\widehat{h_{r-1}^{\varepsilon_{r-1}}}\cdots\widehat{h_1^{\varepsilon_1}}}\neq\emptyset$
and we denote
$\nu=\widehat{h_{r-1}^{\varepsilon_{r-1}}}\cdots\widehat{h_1^{\varepsilon_1}}$.
The induction hypothesis implies that there exists monomials $m_i$,
where $i \in
D_{\widehat{h_{r-1}^{\varepsilon_{r-1}}}\cdots\widehat{h_1^{\varepsilon_1}}}$,
such that

\begin{equation}\label{a}
A_{i_1,h_1}^{\varepsilon_1}A_{i_2,h_2}^{\varepsilon_2}\cdots
A_{i_r,h_r}^{\varepsilon_r} = \left(\sum_{i \in
D_{\widehat{h_{r-1}^{\varepsilon_{r-1}}}\cdots\widehat{h_1^{\varepsilon_1}}}}m_i
e_{i \nu(i)}\right) \left(\sum_{j \in
D_{\widehat{h_r^{\varepsilon_r}}}}(y_{j\widehat{h_r}(j)})^{\varepsilon_r}
e_{j\widehat{h_r^{\varepsilon_r}}(j)}\right)
\end{equation}

Note that $e_{i \nu(i)} e_{j\widehat{h_r^{\varepsilon_r}}(j)}\neq 0$
for some $j$ if and only if $i \in
D_{\widehat{h_{r}^{\varepsilon_r}}\cdots\widehat{h_1^{\varepsilon_1}}}$.
In this case, the product equals $e_{i
\widehat{h_r^{\varepsilon_r}}(j)}$. Hence, we obtain
\[A_{i_1,h_1}^{\varepsilon_1}A_{i_2,h_2}^{\varepsilon_2}\cdots
A_{i_r,h_r}^{\varepsilon_r}=\sum_{i\in
D_{\widehat{h_{r}^{\varepsilon_r}}\cdots\widehat{h_1^{\varepsilon_1}}}}
(m_i(y_{j\widehat{h_r}(j)})^{\varepsilon_r})  e_{i
\widehat{h_r^{\varepsilon_r}}(\nu(i))},\] and the result follows.
Now, assume that
$D_{\widehat{h_{r}^{\varepsilon_r}}\cdots\widehat{h_1^{\varepsilon_r}}}=\emptyset$.
If
$D_{\widehat{h_{r-1}^{\varepsilon_{r-1}}}\cdots\widehat{h_1^{\varepsilon_1}}}=\emptyset$
then by the induction hypothesis
$A_{i_1,h_1}^{\varepsilon_1}A_{i_2,h_2}^{\varepsilon_2}\cdots
A_{i_{r-1},h_{r-1}}^{\varepsilon_{r-1}}=0$ and the result holds.
Moreover, if
$D_{\widehat{h_{r-1}^{\varepsilon_{r-1}}}\cdots\widehat{h_1^{\varepsilon_1}}}\neq\emptyset$
then we may write the product
$A_{i_1,h_1}^{\varepsilon_1}A_{i_2,h_2}^{\varepsilon_2}\cdots
A_{i_{r},h_{r}}^{\varepsilon_{r}}$ as in (\ref{a}). Since
$D_{\widehat{h_{r}^{\varepsilon_r}}\cdots\widehat{h_1^{\varepsilon_r}}}=\emptyset$,
every product $e_{i \nu(i)} e_{j\widehat{h_r^{\varepsilon_r}}(j)}$
equals zero and therefore
$A_{i_1,h_1}^{\varepsilon_1}A_{i_2,h_2}^{\varepsilon_2}\cdots
A_{i_r,h_r}^{\varepsilon_r}=0$.
\end{proof}

\begin{definition}\label{seq}
Suppose ${\bf h}=(h_{{1}}^{\epsilon_{1}},\ldots,h_{{m}}^{\epsilon_{m}})\in G^m$ such that $D_{\widehat{h_{m}^{\varepsilon_m}}\cdots\widehat{h_1^{\varepsilon_r}}}\neq \emptyset$.

For each $k \in D_{\widehat{h_{m}^{\varepsilon_m}}\cdots\widehat{h_1^{\varepsilon_r}}}$, we denote by
$s_{k}({\bf h}) =
(s_{1}^{k},\ldots,s_{m}^{k},s_{m+1}^{k})$ the following sequence,
inductively by setting:
\begin{description}
    \item (1) $s_{1}^{k} = k$
    \item (2) $s_{r}^{k} = \widehat{h_{r-1}^{\varepsilon_{r -
                1}}}(s_{r-1}^k)$, for $r \in \{2,\ldots,m+1\}$.
\end{description}
We denote by
$t_{k}({\bf h}) =(t_{1}^{k},\ldots,t_{m}^{k})$ the sequence defined by \[t_{r}^{k} = \widehat{h_{r}}(s_r^k),\quad r \in \{1,\ldots,m\}\]
\end{definition}

\begin{lemma}\label{lema0}
Let $h_{{1}}^{\varepsilon_{1}},\ldots,h_{{m}}^{\varepsilon_{m}}\in G
$ such that $D_{\widehat{h_{m}^{\varepsilon_m}}\cdots\widehat{h_1^{\varepsilon_1}}}\neq \emptyset$. Then
\begin{center}
$A_{1,h_{1}}^{\varepsilon_{1}}\cdots A_{m,h_{m}}^{\varepsilon_{m}}
= \sum\limits_{k \in D_{\widehat{h_{m}^{\varepsilon_m}}\cdots\widehat{h_1^{\varepsilon_1}}}}
\omega_{k}^me_{k,s_{m+1}^{k}}$,
\end{center}
where $\omega_{k}^m =
(y_{s_{1}^{k},t_{1}^{k}}^{1})^{\varepsilon_{1}}\cdots
(y_{s_{m}^{k},t_{m}^{k}}^{m})^{\varepsilon_{m}}$. Furthermore, each matrix in the product $(A_{1,h_{1}}^{\varepsilon_{1}},\ldots,A_{m,h_{m}}^{\varepsilon_{m}})$ contributes with exactly one factor of it in the product
$(y_{s_{1}^{k},t_{1}^{k}}^{1})^{\varepsilon_{1}}\cdots (y_{s_{m}^{k},t_{m}^{k}}^{m})^{\varepsilon_{m}}$. For each $p \in \{1,\ldots,m\}$, $A_{p,h_{p}}^{\varepsilon_{p}}$ contributes with $(y_{s_{p}^{k},t_{p}^{k}}^{p})^{\varepsilon_{p}}$.
\end{lemma}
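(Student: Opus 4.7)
The plan is to prove the lemma by induction on $m$, mirroring the proof of Lemma~\ref{line} but now tracking the explicit $\Omega$-monomial that accumulates along the product rather than merely noting its vanishing.

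For $m=1$, the identity $A_{1,h_1}^{\varepsilon_1} = \sum_{k \in D(h_1^{\varepsilon_1})}(y^1_{s_1^k,t_1^k})^{\varepsilon_1} e_{k,s_2^k}$ is exactly what Definition~\ref{generic} supplies, once one observes that $s_1^k=k$, $s_2^k=\widehat{h_1^{\varepsilon_1}}(k)$ and $t_1^k=\widehat{h_1}(k)$. In the starred case one interprets $(y^1_{s_1^k,t_1^k})^*$ via the involution on $\Omega$, namely $(y_{i,\widehat{g}(i)})^* = y_{\widehat{g}^{-1}(i),i}$, which is precisely the entry of $A_{1,h_1}^*$ prescribed in Definition~\ref{generic}.

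For the inductive step, write
\[
A_{1,h_1}^{\varepsilon_1}\cdots A_{m,h_m}^{\varepsilon_m} = \bigl(A_{1,h_1}^{\varepsilon_1}\cdots A_{m-1,h_{m-1}}^{\varepsilon_{m-1}}\bigr)\,A_{m,h_m}^{\varepsilon_m}.
\]
Set $\nu = \widehat{h_{m-1}^{\varepsilon_{m-1}}}\cdots\widehat{h_1^{\varepsilon_1}}$. Since $D_{\widehat{h_m^{\varepsilon_m}}\nu}\neq \emptyset$ forces $D_\nu\neq \emptyset$, the inductive hypothesis gives
\[
A_{1,h_1}^{\varepsilon_1}\cdots A_{m-1,h_{m-1}}^{\varepsilon_{m-1}} = \sum_{k \in D_\nu} \omega_k^{m-1}\, e_{k,s_m^k},
\]
while Definition~\ref{generic} yields
\[
A_{m,h_m}^{\varepsilon_m} = \sum_{j \in D(h_m^{\varepsilon_m})} \alpha_j^m\, e_{j,\widehat{h_m^{\varepsilon_m}}(j)},
\]
where $\alpha_j^m = y^m_{j,\widehat{h_m}(j)}$ if $\varepsilon_m$ is empty and $\alpha_j^m = y^m_{\widehat{h_m}^{-1}(j),j}$ if $\varepsilon_m = *$.

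Multiplying the two expressions, the matrix-unit product $e_{k,s_m^k}\,e_{j,\widehat{h_m^{\varepsilon_m}}(j)}$ vanishes unless $j = s_m^k$, which in turn requires $s_m^k \in D(h_m^{\varepsilon_m})$, i.e.\ $k \in D_{\widehat{h_m^{\varepsilon_m}}\cdots\widehat{h_1^{\varepsilon_1}}}$. For such $k$, the resulting column index is $\widehat{h_m^{\varepsilon_m}}(s_m^k) = s_{m+1}^k$, and the factor contributed by the last matrix is $\alpha_{s_m^k}^m$, which equals $(y^m_{s_m^k,t_m^k})^{\varepsilon_m}$: directly when $\varepsilon_m$ is empty via $t_m^k = \widehat{h_m}(s_m^k)$, and in the starred case by the involution rule on $\Omega$ applied with $i=s_m^k$. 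Concatenating this new factor with $\omega_k^{m-1}$ produces $\omega_k^m$, establishing both the product formula and the assertion that each $A_{p,h_p}^{\varepsilon_p}$ contributes exactly one symbol, namely $(y^p_{s_p^k,t_p^k})^{\varepsilon_p}$. The only delicate point—where I expect the main (though still modest) obstacle to lie—is the notational consistency in the starred case: one must read $t_p^k$ symbolically so that the involution on $\Omega$ returns precisely the variable that Definition~\ref{generic} assigns to the corresponding position of $A_{p,h_p}^*$; beyond this bookkeeping no genuine difficulty arises.
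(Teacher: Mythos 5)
Your proof is correct and follows essentially the same route as the paper's: induction on $m$, peeling off the last factor $A_{m,h_m}^{\varepsilon_m}$, and using the matrix-unit multiplication together with $t_m^k=\widehat{h_m}(s_m^k)$ to identify the new factor $(y^m_{s_m^k,t_m^k})^{\varepsilon_m}$. Your extra care with the starred case via the involution on $\Omega$ is sound bookkeeping that the paper leaves implicit.
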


\begin{proof}

The proof follows by induction on $m$. If $m=1$, the result is
obvious.

Suppose $m>1$. By the induction hypothesis, we obtain

\begin{align*}
A_{1,h_{1}}^{\varepsilon_{1}}\cdots A_{m,h_{m}}^{\varepsilon_{m}}
 & = (A_{1,h_{1}}^{\varepsilon_{1}}\cdots
A_{m-1,h_{m-1}}^{\varepsilon_{m-1}})A_{m,h_{m}}^{\varepsilon_{m}}\\ & = \sum\limits_{k \in
D_{\widehat{h_{m-1}^{\varepsilon_{m-1}}}\cdots\widehat{h_1^{\varepsilon_1}}}}
\omega_{k}^{m-1}e_{k,s_{m}^{k}}\sum\limits_{i \in
D(\widehat{h_m^{\varepsilon_m}})}
(y_{i,\widehat{h_m}(i)}^{m})^{\varepsilon_m}e_{i\widehat{h_m}(i)} \\ & = \sum\limits_{k \in
D_{\widehat{h_{m-1}^{\varepsilon_{m-1}}}\cdots\widehat{h_1^{\varepsilon_1}}}}
\omega_{k}^{m-1}(y^m_{s_m^k,\widehat{h_m}(s_m^k)})^{\varepsilon_m}e_{k,s_{m+1}^{k}}
\end{align*}

Now, the proof follows once one observes that $t_{m}^{k} =
\widehat{h_{m}}(s_m^k)$ and
$\omega_{k}^{m}=\omega_{k}^{m-1}(y^m_{s_m^k,t_m^k})^{\varepsilon_m}$.
\end{proof}

\begin{definition}
    Let $\sigma \in S_{m}$. For $m = x_{i_{\sigma(1)},h_{\sigma(1)}}^{\varepsilon_{\sigma(1)}}\ldots x_{i_{\sigma(n),h_{\sigma(n)}}}^{\varepsilon_{\sigma(n)}}$, and any two integers $1 \leq k \leq l \leq n$, we denote $m^{[k,l]}$ the subword obtained from $m$ by deleting  the first $k - 1$ and the last $m - l$ variables.
    \begin{center}
        $m^{[k,l]} = x_{i_{\sigma(k),h_{\sigma(k)}}}^{\varepsilon_{\sigma(k)}}\ldots
        x_{i_{\sigma(l),h_{\sigma(l)}}}^{\varepsilon_{\sigma(l)}}$.
    \end{center}
\end{definition}

\begin{lemma}\label{tecnico}
    Let $x_{i_1,h_1}^{\varepsilon_1}\cdots x_{i_r,h_r}^{\varepsilon_r}$ and $x_{j_1,h_1'}^{\eta_1}\cdots x_{j_l,h_l'}^{\eta_l}$ be two monomials, with $\varepsilon_k$ and $\eta_k$ being $*$ or nothing, such that the matrices $A_{i_1,h_1}^{\varepsilon_1}\cdots A_{i_r,h_r}^{\varepsilon_r}$ and $A_{j_1,h_1'}^{\eta_1}\cdots A_{j_s,h_s'}^{\eta_s}$ have in the same position, the same nonzero entry.
    Then, $r=l$ and there exists a permutation $\sigma \in S_{r}$ such that $j_{q} = i_{\sigma(q)}$ and $h_{q}' = h_{\sigma(q)}$ for all $q \in \{1,\dots, r\}$. In particular, $x_{i_1,h_1}^{\varepsilon_1}\cdots x_{i_r,h_r}^{\varepsilon_r} - x_{j_1,h_1'}^{\eta_1}\cdots x_{j_l,h_l'}^{\eta_l}$ is a strongly multi-homogeneous polynomial.
    If this entry is $(y_{{s}_{1}^{k},{t}_{1}^{k}}^{i_1})^{\varepsilon_{1}}\cdots (y_{{s}_{r}^{k},{t}_{r}^{k}}^{i_r})
    ^{\varepsilon_{r}}$, then $(y_{{s}_{\sigma(q)}^{k},{t}_{\sigma(q)}^{k}}^{i_\sigma(q)})^{\varepsilon_{\sigma(q)}} =  (y_{{s'}_{q}^{k},{t'}_{q}^{k}}^{j_q})^{\eta_{q}}$
    for $q = 1, \ldots, r$.
\end{lemma}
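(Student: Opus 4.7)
The plan is to apply Lemma \ref{lema0} to both products to make the common entry explicit, and then compare the resulting expressions inside the commutative polynomial ring $F[\Omega]$.

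First, since the two products have the same nonzero entry at a common position $(k,c)$, Lemma \ref{line} forces $c=s_{r+1}^{k}={s'}_{l+1}^{k}$, and Lemma \ref{lema0} identifies this entry in the first product as
\[\omega=\prod_{p=1}^{r}\bigl(y_{s_p^{k},\,t_p^{k}}^{\,i_p}\bigr)^{\varepsilon_p}\]
and in the second as $\omega'=\prod_{q=1}^{l}\bigl(y_{{s'}_q^{k},\,{t'}_q^{k}}^{\,j_q}\bigr)^{\eta_q}$. Since $F[\Omega]$ is a polynomial ring in commuting variables, the equality $\omega=\omega'$ of these monomials forces them to agree factor for factor: comparing total degrees gives $r=l$, and unique factorization supplies a permutation $\sigma\in S_r$ with
\[\bigl(y_{s_{\sigma(q)}^{k},\,t_{\sigma(q)}^{k}}^{\,i_{\sigma(q)}}\bigr)^{\varepsilon_{\sigma(q)}}=\bigl(y_{{s'}_q^{k},\,{t'}_q^{k}}^{\,j_q}\bigr)^{\eta_q}\]
for every $q$, which is already the subsidiary claim of the lemma.

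Next I would extract $i_{\sigma(q)}=j_q$ and $h_{\sigma(q)}=h_q'$ from each of these factor equalities. The superscript on an element of $\Omega$ is untouched by $*$, so $i_{\sigma(q)}=j_q$ is immediate. For the group label I rely on the observation that every $y_{a,b}^{\,i}\in\Omega$ carries a canonical associated group element $g_a^{-1}g_b\in G_0$, and I check that, independently of $\varepsilon$, the associated group element of $(y_{s_p^{k},t_p^{k}}^{\,i_p})^{\varepsilon_p}$ is $h_p$. For $\varepsilon_p$ trivial this is direct from $t_p^{k}=\widehat{h_p}(s_p^{k})$; for $\varepsilon_p=*$, the intrinsic definition of $*$ on $\Omega$ rewrites the factor as $y_{\widehat{h_p}^{-1}(s_p^{k}),\,s_p^{k}}^{\,i_p}$, and the relation $g_{\widehat{h_p}^{-1}(s_p^{k})}h_p=g_{s_p^{k}}$ then gives associated group element $h_p$ again (invoking Lemma \ref{g=h} if one prefers to state the uniqueness more formally). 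Hence $h_{\sigma(q)}=h_q'$. Since each position of the product contributes exactly one to the combined degree $\deg_{x_{t,g_t}}+\deg_{x_{t,g_t}^*}$, and $\sigma$ matches the pairs $(i,h)$, it follows that $x_{i_1,h_1}^{\varepsilon_1}\cdots x_{i_r,h_r}^{\varepsilon_r}-x_{j_1,h_1'}^{\eta_1}\cdots x_{j_l,h_l'}^{\eta_l}$ is strongly multi-homogeneous.

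The subtlety I expect to need care with is that an element of $\Omega$ may admit several descriptions as $(y_{a,b}^{\,i})^{\varepsilon}$ (fixed points of $*$ on the diagonal being the clearest case), so $\sigma$ need not respect the $\varepsilon$'s. The lemma makes no such claim, and the ``associated group element'' device handles this cleanly because it depends only on the resulting element of $\Omega$ and not on the representation used.
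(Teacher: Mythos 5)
Your proof is correct and follows essentially the same route as the paper: both apply Lemma \ref{lema0} to write out the common entry, equate the two monomials in the commutative ring $F[\Omega]$ to get $r=l$ and a factor-matching permutation $\sigma$, and then read off $i_{\sigma(q)}=j_q$ and $h'_{q}=h_{\sigma(q)}$. The only divergence is in the last step, where the paper argues by the two cases $\varepsilon_{\sigma(q)}=\eta_q$ and $\varepsilon_{\sigma(q)}\neq\eta_q$ (via Lemmas \ref{g=h} and \ref{gh}), whereas your observation that each variable $y^{i}_{a,b}\in\Omega$ determines the group element $g_a^{-1}g_b$ (unique by Lemma \ref{g=h}), which equals $h_p$ for the factor contributed by $A_{i_p,h_p}^{\varepsilon_p}$ whether or not $\varepsilon_p=*$, treats both cases uniformly --- a slightly cleaner bookkeeping of the same argument.
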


\begin{proof}

Let $A_{i_1,h_{1}}^{\varepsilon_{1}}\cdots
A_{i_r,h_{r}}^{\varepsilon_{r}} = \sum\limits_{k \in
D_{\widehat{h_{m}^{\varepsilon_r}}\cdots\widehat{h_1^{\varepsilon_1}}}}
\omega_{k}^re_{k,s_{r+1}^{k}}$, and $A_{j_1,h_{1}'}^{\eta_{1}}\cdots
A_{j_l,h_{l}'}^{\eta_{l}} = \sum\limits_{k \in
D_{\widehat{h_{l}'^{\varepsilon_l}}\cdots\widehat{h_1'^{\varepsilon_1}}}}
\tilde{\omega}_{k}^le_{k,s_{m+1}^{k}}$ as in Lemma \ref{lema0}. Let
$k$ be the row in which these two matrices have the same nonzero
entry. Let ${\bf h}=(h_1,\dots,h_r)$ and ${\bf
h'}=(h_1',\dots,h_l')$ and consider the sequences $s_k({\bf
h})=(s_1^k,\dots, s_{r+1}^k)$, $s_k({\bf h'})=({s'}_1^{k},\dots,
{s'}_{r+1}^{k})$, $t_k({\bf h})=(t_1^k,\dots, t_{r+1}^k)$, $t_k({\bf
h'})=({t'}_1^{k},\dots, {t'}_{r+1}^{k})$ as in Definition \ref{seq}.
Then $\omega_{k}^r=\tilde{\omega}_{k}^l$, where
 $\omega_{k}^r = (y_{s_{1}^{k},t_{1}^{k}}^{i_1})^{\varepsilon_{1}}\cdots
 (y_{s_{r}^{k},t_{r}^{k}}^{i_r})^{\varepsilon_{r}}$ and  $\tilde\omega_{k}^l =(y_{{s'}_{1}^{k},{t'}_{1}^{k}}^{j_1})^{\eta_{1}}\cdots
  (y_{{s'}_{l}^{k},{t'}_{l}^{k}}^{j_l})^{\eta_{l}}$.
  Of course, we have $r=l$, and for each $q\in \{1\dots,r\}$, there exists $p\in \{1\dots,r\}$ such
  that $(y_{{s'}_{q}^{k},{t'}_{q}^{k}}^{j_q})^{\eta_{q}} = (y_{{s}_{p}^{k},{t}_{p}^{k}}^{i_p})^{\varepsilon_{p}}$.

Let us now consider two cases:

Case 1: $\varepsilon_p=\eta_q$. Then $i_p=j_q$, $s_p^k={s'}_q^k$ and
$t_p^k={t'}_q^k$. Since ${t'}_q^k=\widehat{h_q'}({s'}_q^k)$ and
${t}_p^k=\widehat{h_p}({s}_p^k)$, we obtain
$\widehat{h_p}(s_p^k)=\widehat{h'_q}(s_p^k)$ and Lemma \ref{lema-1}
implies that $h_p=h_q'$.

Case 2: $\varepsilon_p\neq\eta_q$. We suppose $\varepsilon_p=*$.
Then, we have
\[(y_{{s'}_{q}^{k},\widehat{h'_q}({s'}_{q}^{k})}^{j_q})=(y_{{s}_{p}^{k},\widehat{h_p}({s}_{p}^{k})}^{i_p})^*=y_{\widehat{h_p^{-1}}({s}_{p}^{k}),{s}_{p}^{k}}^{i_p}.\]
By comparing the indexes, we obtain $i_p=j_q$, $s_p^k=\widehat{h'_q}({s'}_q^k)$ and ${s'}_q^k=\widehat{h_p^{-1}}(s_p^k)$.

Hence ${s'}_q^k=\widehat{h_p^{-1}}(\widehat{h'_q}({s'}_q^k))$ and by Lemmas \ref{g=h} and \ref{gh}, we obtain $h'_qh_p^{-1}=e$ and $h'_q=h_p$.

From the above, we conclude that $x_{i_1,h_1}^{\varepsilon_1}\cdots x_{i_r,h_r}^{\varepsilon_r} - x_{j_1,h_1'}^{\eta_1}\cdots x_{j_l,h_l'}^{\eta_l}$
is a strongly multi-homogeneous polynomial.

\end{proof}

\begin{remark}
Suppose that the same entry is $(k,l)$. Notice that there exist
matrix units $e_{a_{1}b_{1}} \in M_{n}(F)_{\alpha(x_{j_{1},h_{1}'})}
,\ldots, e_{a_{r}b_{r}} \in M_{n}(F)_{\alpha(x_{j_{r},h_{r}'})}$
such that
$(e_{a_{1}b_{1}})^{\eta_{1}}\ldots(e_{a_{r}b_{r}})^{\eta_{r}} =
e_{kl}$ and
$(e_{a_{\sigma(p)}b_{\sigma(p)}})^{\varepsilon_{\sigma(p)}} =
(e_{a_{p}b_{p}})^{\eta_{p}}$ for $p = 1,\ldots,r$.
\end{remark}

\section{The main theorem}

We denote by $J$ the $T_{G}^{*}$-ideal generated by the polynomials
\begin{center}
$x_{1,e}x_{2,e} - x_{2,e}x_{1,e}$ and  $x_{1,e} - x_{1,e}^{*}$.
\end{center}

In next two lemmas, we follow the ideas of \cite{Azevedo},
\cite{Diogo}, and \cite{Vasilovsky2}.

\begin{lemma}\label{transposicao1.5}
Let $m_{1}(x_{i_{1},h_1}^{\varepsilon_{1}},\ldots,x_{i_{r},h_r}^{\varepsilon_{r}}),m_{2}(x_{i_{1},h_1}^{\eta_{1}},\ldots,x_{i_{l},h_l}^{\eta_{l}})$ be two monomials that start with the same variable and let $\overline{m_{1}}$ and $\overline{m_{2}}$ be the monomials obtained from $m_{1}$ and $m_{2}$ by deleting the first variable.

If $m_{1}(A_{i_{1},h_1}^{\varepsilon_{1}},\ldots,A_{i_{r},h_r}^{\varepsilon_{r}})$ and $m_{2}(A_{i_{1},h_1}^{\eta_{1}},\ldots,A_{i_{l},h_l}^{\eta_{l}})$ have in the same position the same non-zero entry, then
$\overline{m_{1}}(A_{i_{1},h_1}^{\varepsilon_{1}},\ldots,A_{i_{r},h_r}^{\varepsilon_{r}})$ and $\newline \overline{m_{2}}(A_{i_{1},h_1}^{\eta_{1}},\ldots,A_{i_{l},h_l}^{\eta_{l}})$ have in the same position the same non-zero entry.
\end{lemma}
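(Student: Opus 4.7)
The plan is to factor out the common leading generic matrix from both products and use that $F[\Omega]$ is an integral domain. Since $m_{1}$ and $m_{2}$ begin with the same variable, say $x_{i_{1},h_{1}}^{\varepsilon_{1}}$ (so $\varepsilon_{1}=\eta_{1}$ and the leading indices agree), I can write $m_{t}(A_{\ldots})=A_{i_{1},h_{1}}^{\varepsilon_{1}}\cdot \overline{m_{t}}(A_{\ldots})$ for $t=1,2$, with the same matrix factor on the left in both cases.

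Next I would examine the $(k,l)$ position, where by hypothesis both products have the same nonzero entry $\omega$. The row structure of a single generic matrix (Definition \ref{generic}) is the crucial input: the $k$-th row of $A_{i_{1},h_{1}}^{\varepsilon_{1}}$ contains at most one nonzero entry. If that row were zero, Lemma \ref{line} would force the $k$-th row of $m_{t}(A_{\ldots})$ to vanish, contradicting $\omega\neq 0$; so there is a unique column $p:=\widehat{h_{1}^{\varepsilon_{1}}}(k)$ at which $A_{i_{1},h_{1}}^{\varepsilon_{1}}$ is nonzero, with entry a single indeterminate $y\in\Omega$. Expanding the matrix product then gives $\omega=y\cdot (\overline{m_{t}}(A_{\ldots}))_{p,l}$ for both $t$.

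Finally, since $y$ is a nonzero element of the integral domain $F[\Omega]$, cancellation yields $(\overline{m_{1}}(A_{\ldots}))_{p,l}=(\overline{m_{2}}(A_{\ldots}))_{p,l}=\omega/y\neq 0$, so $\overline{m_{1}}$ and $\overline{m_{2}}$ evaluated on the generic matrices share the same nonzero entry at position $(p,l)$, which is exactly what the lemma asserts. I do not foresee any real obstacle here: the whole argument consists of factoring off the common leading matrix (legitimate because its $k$-th row has at most one nonzero entry) and cancelling in a polynomial ring.
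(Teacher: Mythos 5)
Your argument is correct and is essentially the paper's own: the paper's one-line proof (``it follows from Lemma \ref{lema0}'') rests on exactly the factorization you make explicit, namely that the common nonzero entry at $(k,l)$ factors as the single indeterminate contributed by the shared first generic matrix times the $(\widehat{h_1^{\varepsilon_1}}(k),l)$-entry of the truncated product. Your cancellation step in the domain $F[\Omega]$ is a clean way to finish, so no gaps.
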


\begin{proof}
It follows from Lemma \ref{lema0}.
\end{proof}

\begin{lemma}\label{transposicao2}
Let $m_{1}=x_{i_1,h_1}^{\varepsilon_{1}}\cdots
x_{i_r,h_r}^{\varepsilon_{r}}$ and $m_{2}=x_{j_1,h'_1}^{\eta_{1}}
\cdots x_{j_l,h'_l}^{\eta_{l}}$ be two monomials such that
\begin{center}
$A_{i_{1},h_1}^{\varepsilon_{1}} \cdots
A_{i_{r},h_r}^{\varepsilon_{r}}$ and
$A_{j_{1},h'_1}^{\eta_{1}}\cdots A_{j_{l},h'_l}^{\eta_{l}}$
\end{center}
have in the same position, the same non-zero entry. Then $m_{1}
\equiv m_{2} \mod  J$.
\end{lemma}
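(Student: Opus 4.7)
I would proceed by induction on the common length $r$ of $m_1$ and $m_2$, which is forced to coincide by Lemma~\ref{tecnico}.

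For the base case $r=1$, Lemma~\ref{tecnico} already gives $(i_1,h_1)=(j_1,h_1')$. If $\varepsilon_1=\eta_1$ then $m_1=m_2$. Otherwise, matching the entries $y^{i_1}_{k,\widehat{h_1}(k)}$ and $y^{i_1}_{\widehat{h_1}^{-1}(k),k}$ of $A_{i_1,h_1}$ and $A_{i_1,h_1}^{*}$ at the same position forces $k=\widehat{h_1}(k)$, hence $h_1=e$ by definition of $\widehat{h_1}$; identity~(2) then yields $m_1\equiv m_2 \bmod J$.

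For the inductive step $r>1$, let $u_1=x_{i_1,h_1}^{\varepsilon_1}$ be the first variable of $m_1$ and let $p$ be the position in $m_2$ whose $y$-variable factor matches that of $u_1$, as guaranteed by Lemma~\ref{tecnico}. The plan is to rewrite $m_2$ modulo $J$ so that its first variable has the same underlying pair $(i_1,h_1)$ as $u_1$. Once this is achieved, any residual $*$-discrepancy is forced, exactly as in the base case, to occur only when $h_1=e$, and is fixed by identity~(2). At that point Lemma~\ref{transposicao1.5} implies that the tails $\bar m_1$ and $\bar m_2$ still share a common nonzero entry at the same position, and the induction hypothesis on length $r-1$ concludes the argument.

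The main obstacle is bringing the $p$-th variable of $m_2$ to position~$1$ modulo $J$. I would handle this by a secondary induction on $p$, moving $v_p$ one step to the left at each stage using the identities that already lie in $J$: identity~(1) to commute two adjacent neutral-degree variables, identity~(2) to toggle the $*$ on an $e$-variable, and identity~(4), that is, $x_{1,g}x_{2,g^{-1}}x_{3,g}\equiv x_{3,g}x_{2,g^{-1}}x_{1,g}$ (which belongs to $J$ by the remark following Proposition~\ref{ids}). The detailed matching of $y$-factors from Lemma~\ref{tecnico}, together with the equality $h'_q=h_p$ that emerges from Case~2 of its proof, constrains the local pattern of degrees and $*$-decorations around $v_p$ enough that at every stage one of these identity templates can be applied. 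The delicate point will be verifying this case-by-case, exploiting the sequences $s_r^k, t_r^k$ of Lemma~\ref{lema0} for both monomials and translating between them via the permutation $\sigma$ of Lemma~\ref{tecnico}.
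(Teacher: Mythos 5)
Your overall skeleton agrees with the paper's: induction on the length, the same base case (a single variable, where a $*$-mismatch forces $\widehat{h_1}(p)=p$, hence $h_1=e$ by Lemma~\ref{g=h}, and identity (2) finishes), and the same reduction of the inductive step to ``rewrite $m_2$ modulo $J$ so that it begins with the first variable of $m_1$, then apply Lemma~\ref{transposicao1.5} and the induction hypothesis.'' The problem is that the only genuinely hard part of the lemma is precisely the rewriting step, and your mechanism for it --- a secondary induction moving the distinguished variable $v_p$ one position to the left at each stage, applying one of the templates (1), (2), (4) locally --- is both unsubstantiated and, as stated, does not work. The matching of $y$-letters from Lemma~\ref{tecnico} constrains $v_p$ itself (its row and column data in the generic product), but says nothing about the degree of its immediate left neighbour; if $v_p$ has degree $g\neq e$ and its neighbour has degree $h\notin\{e,g^{-1}\}$, none of (1), (2), (4) applies to that adjacent pair, so a single-step transposition is simply not available modulo $J$. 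Your closing sentence (``at every stage one of these identity templates can be applied \dots the delicate point will be verifying this case-by-case'') is exactly the statement that needs proof, and it is false at the level of adjacent single-variable moves.

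What the paper actually does is different in kind: since $J$ is a $T_G^*$-ideal, the generators (1) and (2) may be applied to whole \emph{subwords}, and the argument locates subwords of $m_2$ of neutral degree (or of mutually inverse degrees) which can then be permuted or replaced by their $*$-images in one stroke. These subwords are not found by a left-to-right march; they are determined by the indices $k_2=\sigma^{-1}(1)$, $k_1=\sigma^{-1}(t+1)$, $k_3=\sigma^{-1}(t)$, where $t$ is the least integer with $\sigma^{-1}(t+1)<\sigma^{-1}(1)\leq\sigma^{-1}(t)$, and the fact that prefixes such as $m_2^{[1,k_2-1]}$ have degree $e$ is read off from the elementary-matrix realization of $m_2$ (the Remark after Lemma~\ref{tecnico}): the row/column data of the matrix units force the relevant partial products to be of the form $e_{pp}$. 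The subsequent case analysis on the decorations $\varepsilon_1,\eta_1,\varepsilon_t,\eta_t,\varepsilon_{t+1},\eta_{t+1}$ (including the subtlety that the same $y$-letter may be contributed by a different occurrence $w\neq k_2$ when the polynomial contains both a variable and its $*$) is where the content of the proof lies, and it is absent from your plan. Incidentally, the paper's proof never needs identity (4) as a separate tool, since block moves via (1) and (2) suffice; invoking (4) does not rescue the adjacent-swap strategy, because its pattern $x_g x_{g^{-1}} x_g$ is no more likely to occur locally than a pair of neutral variables.
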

\begin{proof}

We prove this lemma by induction on $n$.

Suppose $n=1$. If $A_{i_1,h_1}^{\varepsilon_{1}}$ and
$A_{j_1,h'_1}^{\eta_{1}}$ have in the position $(p,q)$ the same nonzero entry, then by Lemma \ref{tecnico}, $i_1=j_1$, and
$h'_1=h_1$. If $\varepsilon_{1}=\eta_{1}$, then $m_1=m_2$ and they are equivalent modulo $J$. If $\varepsilon_{1}\neq \eta_{1}$, by comparing the $(p,q)$ entries, we have $y_{p,\widehat{h_1}(p)}^{i_1} = y_{\widehat{h_{1}^{-1}}(p),p}^{i_1}$. Then, $\widehat{h_1}(p)= p$ and by Lemma \ref{g=h}, we obtain $h = e$, the neutral element of $G$. Hence, they are equivalent modulo $J$.

In proving the inductive step, we will show that $m_{2}$ is
congruent, modulo $J$ to a monomial $m_{3}$ that starts with the
same variable of $m_{1}$. Therefore, $m_{1}$ and $m_{3}$ will have in the same position, the same non-zero entry. According to Lemma \ref{transposicao1.5}, $\overline{m_{1}}$ and $\overline{m_{3}}$ have in the same position the same non-zero entry. Thus, by induction,
$m_{1} \equiv m_{3} \ \ mod \ \ J$, and consequently, $m_{1} \equiv
m_{3} \equiv m_{2} \ \ mod \ \ J$.

According to Lemma \ref{tecnico}, $m_{1} - m_{2}$ is a strongly
multi-homogeneous polynomial. Furthermore, there exists a
permutation $\sigma \in S_{l}$ such that $h_{i_{\sigma(s)}} =
h_{i_{s}}', i_{\sigma(s)} = j_{s}$, for all $s \in \{1,\ldots,l\}$.

From Remark of Lemma \ref{tecnico}, there exist matrix units
\begin{center}
    $e_{a_{1}b_{1}} \in M_{n}(F)_{\alpha(x_{j_{1},h_{1}'})},
    \ldots, e_{a_{l}b_{l}} \in M_{n}(F)_{\alpha(x_{j_{l},h_{l}'})}$
\end{center}
such that  $(e_{a_{1}b_{1}})^{\eta_{1}}\cdots
(e_{a_{l}b_{l}})^{\eta_{l}} = e_{pq}$. Furthermore,
\begin{center}
    $(e_{a_{\sigma(u)}b_{\sigma(u)}})^{\varepsilon_{\sigma(u)}}=
    (e_{a_{u}b_{u}})^{\eta_{u}}$ for $u = 1,\ldots,l$.
\end{center}

Suppose that the same entry of assumption is
$(y_{{s}_{1}^{k},{t}_{1}^{k}}^{i_1})^{\varepsilon_{1}}\cdots
(y_{{s}_{l}^{k},{t}_{l}^{k}}^{i_l})^{\varepsilon_{l}}$ and the same
position is $(p,q)$. Assume $\sigma^{-1}(1) = 1$. Note that
$(y_{{s}_{1}^{k},{t}_{1}^{k}}^{i_1})^{\varepsilon_{1}} =
(y_{{s'}_{1}^{k},{t'}_{1}^{k}}^{j_1})^{\eta_{1}}, j_{1} = i_{1},$
and $h_{1} = h_{1}'$. The letters
$(y_{{s}_{1}^{k},{t}_{1}^{k}}^{i_1})^{\varepsilon_{1}} =
(y_{{s'}_{1}^{k},{t'}_{1}^{k}}^{j_1})^{\eta_{1}}$ will appear in the
$p$-th row of $A_{i_{1},h_1}^{\varepsilon_{1}}$ and
$A_{j_{1},h'_1}^{\eta_{1}}$. Therefore, $\varepsilon_1=\eta_1$ or
$\varepsilon_{1} \neq \eta_{1}$ and $h_{1} = h_{1}' = e$. The
analysis of first situation is immediate. In the second, we have
$m_{2} \equiv (x_{j_{1},h_{1}'}^{\eta_{1}})^{*}m_{2}^{[2,l]}$ by
identity $x_{1,e} - x_{1,e}^{*}$.

Now, suppose that $\sigma^{-1}(1) > 1$. To analyze the monomial
$m_{2}$, we denote the number $\sigma^{-1}(1)$ by $k_{2}$. Let $t$
be the least positive integer such that $\sigma^{-1}(t+1) <
\sigma^{-1}(1) \leq \sigma^{-1}(t)$. We denote the number
$\sigma^{-1}(t+1)$ by $k_{1}$ and the number $\sigma^{-1}(t)$ by
$k_{3}$. 
We divide the rest of proof into $4$ cases.

\vspace{0,5cm}

Case 1. $\eta_{1} = \mbox{nothing}$ and $\varepsilon_{1} = *$. In
this situation, $b_{1} = p$. If $\varepsilon_{\sigma(1)} = *$, then
$b_{\sigma(1)} = p$. If $\varepsilon_{\sigma(1)} = \mbox{nothing}$,
then $a_{\sigma(1)} = p$. Note that $\alpha(m_{2}^{[1,k_{2}]}) =
\alpha(e_{a_{\sigma(1)}b_{\sigma(1)}}^{\varepsilon_{\sigma(1)}}\ldots
e_{a_{\sigma(k_{2})}b_{\sigma(k_{2})}}^{\varepsilon_{\sigma(k_{2})}})
= \alpha(e_{pp}) = e$. Therefore,
\begin{center}
$m_{2} \equiv m_{4} =
x_{j_{k_{2}},h_{k_{2}}'}(m_{2}^{[1,k_{2}-1]})^{*}(m_{2}^{[k_{2}+1,l]})
\ \ mod \ \ J$.
\end{center}
The last equivalence follows from identity $x_{1,e} - x_{1,e}^{*}$.
If $h_{k_{2}}' = h_{1} = e$, the result follows from identity
$x_{1,e} - x_{1,e}^{*}$. Observe that the variables
$x_{i_{1},h_{1}}^{\varepsilon_{1}}$ and $x_{i_{1},h_{1}} =
x_{j_{k_{2}}h_{k_{2}}'}$ could contribute with the same letter of
$F[\Omega]$ in entry $(p,q)$. If this occurs, then $h_{k_{2}}' =
h_{1} = e$. From $x_{1,e} - x_{1,e}*$, we obtain the desired result.
Otherwise, suppose that $h_{1} \neq e$,
$x_{i_{1}h_{1}}^{\varepsilon_{1}}$ and $x_{j_{k_{2}},h_{k_{2}}'}$ do
not contribute with the same letter of $F[\Omega]$. Thus, there
exists an integer $w, w \neq k_{2}, 1 < w \leq l,$ such that
$x_{j_{w},h_{w}'}^{\eta_{w}}$ and
$x_{i_{1},h_{1}}^{\varepsilon_{1}}$ contribute with the same letter of $F[\Omega]$ in entry $(p,q)$. Without loss of generality, suppose that $w < k_{2}$. Notice that $\varepsilon_{1} \neq \eta_{w}$ and $\alpha(m_{4}^{[1,w+1]}) = e$. Hence
\begin{center}
$m_{2} \equiv m_{4} \equiv
x_{i_{1},h_{1}}^{\varepsilon_{1}}(m_{4}^{[1,w]})^{*}m_{4}^{[w+2,l]}
\ \ mod \ \ J$.
\end{center}
The Case 1 is verified.

\vspace{0,5cm}

Case 2. $\varepsilon_{1} = \mbox{nothing}$ and $\eta_{1} = *$. It is
analogous to case 2.

\vspace{0,5cm}

Case 3. $\eta_{1} = \varepsilon_{1} = *$. Now,
$\alpha(m_{2}^{[1,k_{2}-1]}) = g_{b_{1}}g_{b_{1}}^{-1} = e$. Suppose
that $\eta_{t} = \eta_{t+1} = \mbox{nothing}$ (the other three cases
are analogous). We analyze eight subcases.

\begin{description}
\item Case 3.1: $k_{1} = 1$.
\subitem Subcase 3.1.1: $\varepsilon_{t} \neq \eta_{t}$ and
$\varepsilon_{t + 1} = \eta_{t+1}$. Here,
$\alpha(m_{2}^{[1,k_{2}-1]})
\newline = \alpha(m_{2}^{[k_{2},k_{3}-1]}) = e$. Consequently, by
identity $x_{1,e}x_{2,e} - x_{2,e}x_{1,e}$, we have $m_{2} \equiv
m_{2}^{[k_{2},k_{3}-1]}m_{2}^{[1,k_{2}-1]}m_{2}^{[k_{3},l]} \ \ mod
\ \ J$. \subitem Subcase 3.1.2: $\varepsilon_{t} = \eta_{t}$ and
$\varepsilon_{t + 1} = \eta_{t+1}$. This subcase is similar to
Subcase 3.1.1. Here $\alpha(m_{2}^{[1,k_{2}-1]}) =
\alpha(m_{2}^{[k_{2},k_{3}]}) = e$.

\subitem Subcase 3.1.3: $\varepsilon_{t} \neq \eta_{t}$ and
$\varepsilon_{t + 1} \neq \eta_{t+1}$. Here, $\alpha(m_{2}^{[1,1]})
= \alpha(m_{2}^{[2,k_{2}-1]})^{-1} =
\alpha(m_{2}^{[k_{2},k_{3}-1]})$. Hence, by identity $x_{1,e} -
x_{1,e}^{*}$, we have \newline $m_{2} \equiv
m_{2}^{[k_{2},k_{3}-1]}(m_{2}^{[1,1]})^{*}(m_{2}^{[2,k_{2}-1]})^{*}m_{2}^{[k_{3},l]}
\ \ mod \ \ J$.

\subitem Subcase 3.1.4: $\varepsilon_{t} = \eta_{t}$ and
$\varepsilon_{t + 1} \neq \eta_{t+1}$ . This subcase is analogous to
Subcase 3.1.3. Now, $\alpha(m_{2}^{[1,1]}) =
\alpha(m_{2}^{[2,k_{2}-1]})^{-1} =
\newline \alpha(m_{2}^{[k_{2},k_{3}]})$.

\item Case 3.2: $k_{1} > 1$.

\subitem Subcase 3.2.1: $\varepsilon_{t} = \eta_{t}$ and
$\varepsilon_{t + 1} = \eta_{t+1}$. Here,
$\alpha(m_{2}^{[1,k_{1}-1]}) \newline =
\alpha(m_{2}^{[k_{1},k_{2}-1]})^{-1} =
\alpha(m_{2}^{[k_{2},k_{3}]})$. Thus, by identity $x_{1,e} -
x_{1,e}^{*}$, we have $m_{2} \equiv
m_{2}^{[k_{2},k_{3}]}(m_{2}^{[1,k_{1}-1]})^{*}(m_{2}^{[k_{1},k_{2}-1]})^{*}m_{2}^{[k_{3}+1,l]}
\ \ mod \ \ J$.

\subitem Subcase 3.2.2: $\varepsilon_{t} \neq \eta_{t}$ and
$\varepsilon_{t + 1} = \eta_{t+1}$. Here,
$\alpha(m_{2}^{[1,k_{1}-1]})
\newline = \alpha(m_{2}^{[k_{1},k_{2}-1]})^{-1} =
\alpha(m_{2}^{[k_{2},k_{3}-1]})$. Thus, by identity $x_{1,e} -
x_{1,e}^{*}$, we have $m_{2} \equiv
m_{2}^{[k_{2},k_{3}-1]}(m_{2}^{[1,k_{1}-1]})^{*}(m_{2}^{[k_{1},k_{2}-1]})^{*}m_{2}^{[k_{3},l]}
\ \ mod \ \ J$.

\subitem Subcase 3.2.3: $\varepsilon_{t} = \eta_{t}$ and
$\varepsilon_{t + 1} \neq \eta_{t+1}$. In this subcase,
$\alpha(m_{2}^{[1,k_{1}]}) \newline =
\alpha(m_{2}^{[k_{1}+1,k_{2}-1]})^{-1} =
\alpha(m_{2}^{[k_{2},k_{3}]})$. Thus, by identity $x_{1,e} -
x_{1,e}^{*}$, we have $m_{2} \equiv
m_{2}^{[k_{2},k_{3}]}(m_{2}^{[1,k_{1}]})^{*}(m_{2}^{[k_{1}+1,k_{2}-1]})^{*}m_{2}^{[k_{3},l]}
\ \ mod \ \ J$.

\subitem Subcase 3.2.4: $\eta_{t} \neq \varepsilon_{t}$ and
$\eta_{t+1} \neq \varepsilon_{t + 1}$. Finally,
\newline $\alpha(m_{2}^{[1,k_{1}]}) =
\alpha(m_{2}^{[k_{1}+1,k_{2}-1]})^{-1} =
\alpha(m_{2}^{[k_{2},k_{3}-1]})$. In this way, by identity $x_{1,e}
- x_{1,e}^{*}$, we have $n \equiv
m_{2}^{[k_{2},k_{3}-1]}(m_{2}^{[1,k_{1}]})^{*}(m_{2}^{[k_{1}+1,k_{2}-1]})^{*}m_{2}^{[k_{3},l]}
\newline mod \ \ J$.
\end{description}

\vspace{0,5cm}

Case 4. $\eta_{1} = \varepsilon_{1} = \mbox{nothing}$. It is similar
to case 3.

\end{proof}

We now recall the result \cite[Corollary 3.2]{DiogoThiago}, which is
based on an idea of \cite[Corollary 11]{CentroneMello} about graded
monomial identities.

\begin{lemma}\label{gradedmonomial}
    If a monomial $x_{i_1,h_1}\dots x_{i_p,h_p}$ in $F\langle X \rangle$ is a graded identity for $M_n(F)$, then it is a consequence of a monomial in $T_G(M_n(F))$ of length at most $2n-1$.
\end{lemma}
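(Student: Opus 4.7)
I would follow the combinatorial approach of \cite[Corollary 3.2]{DiogoThiago} and \cite[Corollary 11]{CentroneMello}: reformulate graded monomial identities as walks in a labeled digraph, then reduce to a tight case admitting a direct length bound. The starting observation, immediate from substituting matrix units (essentially the content of Lemma \ref{line}), is that $m = x_{i_1,h_1}\cdots x_{i_p,h_p}$ is a graded identity of $M_n(F)$ if and only if, for every $k\in\{1,\dots,n\}$, the iterated partial function $\widehat{h_p}\circ\cdots\circ\widehat{h_1}$ is undefined at $k$. Equivalently, in the graph on vertex set $\{1,\dots,n\}$ with edge $i\to j$ labeled by $g_i^{-1}g_j$, no walk follows the label sequence $h_1,\dots,h_p$ starting from any vertex.

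Next I would reduce to a ``tight'' case by handling several easy reductions. If some $h_r\notin\mathrm{Supp}(M_n(F))$, then $x_{i_r,h_r}$ is a length-$1$ graded identity and $m$ is its $T_G$-consequence via two-sided multiplication. If $\deg(m)\notin\mathrm{Supp}(M_n(F))$, then $x_{1,\deg(m)}$ is a length-$1$ identity and $m$ is its $T_G$-consequence via the substitution $x_{1,\deg(m)}\mapsto m$ (valid since $m$ has degree $\deg(m)$). If some proper consecutive subword $m^{[a,b]}$ is already a graded identity, then $m\in\langle m^{[a,b]}\rangle_{T_G}$ and induction on length finishes. So I may assume every $h_r$ and $\deg(m)$ lie in the support and that no proper subword of $m$ is an identity.

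In this tight regime I bound $p\leq 2n-1$ via a walk analysis. Consider the monotone chains $D_0\supseteq D_1\supseteq\cdots\supseteq D_p=\emptyset$, where $D_r$ is the set of starting vertices whose walk survives the first $r$ steps, and $E_1\subseteq\cdots\subseteq E_{p+1}=\{1,\dots,n\}$, where $E_r$ is the set of starting vertices whose walk along $h_r,\dots,h_p$ reaches the end. Tightness forces $D_r\neq\emptyset$ for $r<p$ and $E_r\neq\emptyset$ for $r>1$; together with the position-set disjointness $V_r\cap E_{r+1}=\emptyset$ coming from $m$ being an identity (no single walk can be alive from both ends), this yields the key inequality $|D_r|+|E_{r+1}|\leq n$. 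Tracking the combined progress $(n-|D_r|)+|E_{r+1}|$ from $0$ to $2n$ over $p$ steps, and exploiting the mandatory strict changes at the endpoint steps, then bounds $p\leq 2n-1$.

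The main obstacle is pinning down the exact constant $2n-1$ rather than $2n$; this hinges on the careful bookkeeping in \cite{CentroneMello}. Long stretches of zero-progress steps are ruled out because between strict changes the labels induce genuine permutations of the current alive set, and a sufficiently long stretch of such permutations would let one excise a proper subword that is itself an identity, contradicting tightness. Once the tight length is bounded by $2n-1$, the monomial $m$ in the tight case is a $T_G$-consequence of itself, closing the induction.
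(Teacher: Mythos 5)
You should note first that the paper does not actually prove this lemma: it is quoted from \cite[Corollary 3.2]{DiogoThiago} and \cite[Corollary 11]{CentroneMello}, so your argument has to stand on its own, and it has a genuine gap. The problem is the reduction to the ``tight'' case: after all of your reductions (every $h_r$ in the support, $\deg(m)$ in the support, no proper consecutive subword an identity) the length of $m$ is \emph{not} bounded by $2n-1$, nor by any function of $n$, so the walk-counting you sketch cannot close. Concretely, take $n=3$, $G=\mathbb{Z}=\langle g\rangle$ and the elementary grading of $M_3(F)$ induced by $(e,g,g^{3})$, so that $(M_3(F))_{g^{2}}=Fe_{23}$, $(M_3(F))_{g}=Fe_{12}$, $(M_3(F))_{e}$ is the diagonal, and $g^{3}\in Supp(M_3(F))$. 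For every $k$ the monomial $m_k=x_{1,g^{2}}\,x_{2,e}\cdots x_{k+1,e}\,x_{k+2,g}$ is a graded identity (every evaluation has the form $e_{23}D_1\cdots D_k\,e_{12}=0$ with the $D_i$ diagonal), all its variables have degrees in the support, $\deg(m_k)=g^{3}$ lies in the support, and no proper consecutive subword is an identity: dropping the first letter one can evaluate at $e_{11},\dots,e_{11},e_{12}$, and dropping the last letter at $e_{23},e_{33},\dots,e_{33}$. This also defeats the repair you propose for the zero-progress steps: here the neutral letters act as the identity permutation on the alive set for arbitrarily many consecutive steps, the quantity $(n-|D_r|)+|E_{r+1}|$ stalls, and there is no subword that can be excised.

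What is missing is that consequence in a $T_G$-ideal allows \emph{substitutions}, not only deletion of subwords that are themselves identities: one must group consecutive blocks of variables of $m$ into single variables whose degree is the product of the block. Since $m=x_{i_1,h_1}\cdots x_{i_p,h_p}$ is an identity if and only if for every $u\in T=\{g_1,\dots,g_n\}$ some partial product $\alpha_r=h_1\cdots h_r$ satisfies $u\alpha_r\notin T$, i.e. $\bigcap_{r=1}^{p}C_r=\emptyset$ where $C_r=\{u\in T\,:\,u\alpha_r\in T\}$, one can choose, greedily in increasing order, indices $0=r_0<r_1<\cdots<r_k\le p$ such that each $C_{r_j}$ strictly shrinks the running intersection and $\bigcap_{j=1}^{k}C_{r_j}=\emptyset$; since the intersection starts with $n$ elements, $k\le n\le 2n-1$. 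Then $m'=x_{1,\beta_1}\cdots x_{k,\beta_k}$ with $\beta_j=h_{r_{j-1}+1}\cdots h_{r_j}$ is a graded monomial identity of length at most $2n-1$, and $m$ is obtained from $m'$ by the degree-preserving substitution $x_{j,\beta_j}\mapsto m^{[r_{j-1}+1,r_j]}$ followed by right multiplication by $m^{[r_k+1,p]}$, so $m\in\langle m'\rangle_{T_G}$. This block-substitution step, which is the mechanism behind the cited corollaries, is exactly what your excision-only scheme cannot reproduce; in the example above it reduces $m_k$ to the length-three identity $x_{1,g^{2}}x_{2,e}x_{3,g}$.
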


By Lemma 3.7, a monomial $x_{i_1,h_1}^{\varepsilon_1}\cdots
x_{i_r,h_r}^{\varepsilon_r}$ is a $(G,*)$-identity for $M_n(F)$ if
and only if $D_{\widehat{h_r^{\varepsilon_r}}\cdots
\widehat{h_1^{\varepsilon_1}}}=\emptyset$. In particular, we obtain
the following lemma.

\begin{lemma}
    A monomial $x_{i_1,h_1}^{\varepsilon_1}\cdots x_{i_r,h_r}^{\varepsilon_r}$ is a $(G,*)$-identity for $M_n(F)$ if and only if $x_{i_1,h_1^{\varepsilon_1}}\cdots x_{i_r,h_r^{\varepsilon_r}}$ is a $G$-graded identity for $M_n(F)$.
\end{lemma}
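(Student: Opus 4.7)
The proof should be essentially immediate from Lemma \ref{line} (the lemma labeled 3.7 in the paper), which is already invoked in the paragraph just above. The plan is to characterize both sides of the "if and only if" in terms of the same combinatorial condition on the composition of the position maps $\widehat{\phantom{g}}$, and then observe that the two conditions coincide tautologically.

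First, I would recall that by Lemma \ref{line}, the $(G,*)$-monomial $x_{i_1,h_1}^{\varepsilon_1}\cdots x_{i_r,h_r}^{\varepsilon_r}$ is a $(G,*)$-identity for $M_n(F)$ if and only if the generic product $A_{i_1,h_1}^{\varepsilon_1}\cdots A_{i_r,h_r}^{\varepsilon_r}$ vanishes, which in turn happens if and only if $D_{\widehat{h_r^{\varepsilon_r}}\cdots\widehat{h_1^{\varepsilon_1}}}=\emptyset$. This is precisely the characterization already stated in the paragraph preceding the lemma.

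Next, I would apply exactly the same reasoning to the purely graded monomial $x_{i_1,h_1^{\varepsilon_1}}\cdots x_{i_r,h_r^{\varepsilon_r}}$. Here each variable $x_{i_j,h_j^{\varepsilon_j}}$ carries degree $h_j^{\varepsilon_j}$ (recall $h^* = h^{-1}$), and the corresponding generic matrix $A_{i_j,h_j^{\varepsilon_j}}$ has its nonzero entries supported at the positions $(k,\widehat{h_j^{\varepsilon_j}}(k))$ for $k\in D(h_j^{\varepsilon_j})$. The crucial point is that the pure-graded version of Lemma \ref{line} (which is the special case where every $\varepsilon_i$ is empty and the role of the "degree" is played directly by $h_j^{\varepsilon_j}$) gives that this graded monomial is a $G$-graded identity if and only if the composition $\widehat{h_r^{\varepsilon_r}}\cdots\widehat{h_1^{\varepsilon_1}}$ has empty domain. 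Comparing with the previous step, the two conditions are literally the same, so the two statements are equivalent.

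The main "obstacle," if one wishes to be completely pedantic, is to check that in the $(G,*)$ setting the positional information of $A_{i_j,h_j}^*$ truly matches that of the purely graded matrix $A_{i_j,h_j^{-1}}$; but this is transparent from the defining formulas in Definition \ref{generic}, since $A_{j,g}^*$ places its entries exactly at positions $(i,\widehat{g^{-1}}(i))$ for $i\in D(g^{-1})$, the same positions as $A_{j,g^{-1}}$ (only the labels of the commuting variables occupying those positions differ). Hence the composition of position maps controlling the vanishing of the product is identical in both setups, and the lemma follows.
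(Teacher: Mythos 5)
Your proposal is correct and follows essentially the same route as the paper, which derives the lemma immediately from Lemma \ref{line}: both the $(G,*)$-monomial and the corresponding graded monomial are identities precisely when $D_{\widehat{h_r^{\varepsilon_r}}\cdots\widehat{h_1^{\varepsilon_1}}}=\emptyset$, since $A_{j,g}^*$ and the generic matrix of degree $g^{-1}$ occupy the same positions. Your extra check that only the positional (not the variable-label) data governs the vanishing is exactly the point the paper leaves implicit.
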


The following proposition is a straightforward consequence of the
above lemmas.

\begin{proposition}\label{monomial5}
Let $m$ be a monomial identity of $M_{n}(F)$. Then, $m$ is a
consequence of monomial identities of degree up to $2n-1$.
\end{proposition}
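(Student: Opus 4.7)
The strategy is to reduce the $(G,*)$-monomial case to the known $G$-graded monomial case via the correspondence provided by the unnumbered lemma immediately preceding the proposition, and then transfer back. I will first translate the hypothesis to a statement about ordinary graded monomials, apply Lemma \ref{gradedmonomial} there, and then re-interpret the resulting short identity in the $(G,*)$-setting.

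More concretely, let $m = x_{i_1,h_1}^{\varepsilon_1}\cdots x_{i_r,h_r}^{\varepsilon_r}$ be a $(G,*)$-monomial identity of $M_n(F)$. By the unnumbered lemma just above the proposition, the $G$-graded monomial
\[
m' = x_{i_1,h_1^{\varepsilon_1}}\cdots x_{i_r,h_r^{\varepsilon_r}}
\]
(where $h^{*}$ is read as $h^{-1}$) is a $G$-graded identity of $M_n(F)$. Now invoke Lemma \ref{gradedmonomial}: the graded monomial $m'$ is a consequence of some graded monomial identity of length at most $2n-1$. As is standard for monomial identities, this means there is a subword of $m'$, occurring at consecutive positions $k,k+1,\dots,l$ with $l-k+1\leq 2n-1$, which is itself a $G$-graded identity of $M_n(F)$.

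Let $w = x_{i_k,h_k}^{\varepsilon_k}\cdots x_{i_l,h_l}^{\varepsilon_l}$ be the subword of $m$ at the same positions, and let $w' = x_{i_k,h_k^{\varepsilon_k}}\cdots x_{i_l,h_l^{\varepsilon_l}}$ be its graded translate. Since $w'$ is a graded identity of $M_n(F)$, the unnumbered lemma (applied now in the opposite direction) tells us that $w$ is a $(G,*)$-identity of $M_n(F)$, and by construction $w$ has length at most $2n-1$. Since
\[
m = \bigl(x_{i_1,h_1}^{\varepsilon_1}\cdots x_{i_{k-1},h_{k-1}}^{\varepsilon_{k-1}}\bigr)\cdot w \cdot \bigl(x_{i_{l+1},h_{l+1}}^{\varepsilon_{l+1}}\cdots x_{i_r,h_r}^{\varepsilon_r}\bigr),
\]
the monomial $m$ lies in the $T_G^{*}$-ideal generated by $w$, proving that $m$ is a consequence of a $(G,*)$-monomial identity of degree at most $2n-1$.

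The main potential subtlety is the precise meaning of ``consequence'' in the conclusion of Lemma \ref{gradedmonomial}: one needs the stronger statement that a short \emph{subword} of $m'$ is already an identity, not merely that $m'$ belongs to the $T_G$-ideal of some short monomial obtained by substituting variables by graded polynomials. This stronger form is exactly what is used in the cited argument of \cite{DiogoThiago,CentroneMello} for graded monomial identities, and it is essential here, because only at this subword level does the substitution-free correspondence between $(G,*)$-monomials and $G$-graded monomials provided by the unnumbered lemma allow the transfer back. Given this, no further calculation is required.
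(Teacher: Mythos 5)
Your proof is correct and takes essentially the same route as the paper, which deduces the proposition directly from Lemma \ref{gradedmonomial} combined with the unnumbered correspondence lemma between $(G,*)$-monomial identities and $G$-graded monomial identities, exactly as you do. Your remark about needing the subword (factor) form of Lemma \ref{gradedmonomial} is a fair elaboration of a point the paper leaves implicit, and that stronger form is indeed what the cited arguments of Centrone--de Mello and Diniz--de Mello establish.
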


\begin{remark}
We recall that in \cite{CentroneMello} and \cite{DiogoThiago}, the
authors conjecture that the graded monomial identities of $M_n(F)$
follow from the graded identities of degree up to $n$. We observe
that once this conjecture is true, the same holds for the
$(G,*)$-identities of $M_n(F)$.
\end{remark}

We now state the main theorem of this paper.

\begin{theorem}
    Let $U$ be the $T_{G}^{*}$-ideal generated by identities
    $(1),(2),(3),$ and by the $(G,*)$-monomial identities of degree up to $2n-1$ of $M_n(F)$.
    Then,
    \[T_G^*(M_n(F))=U.\]
\end{theorem}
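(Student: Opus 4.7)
The plan is to prove the nontrivial inclusion $T_G^*(M_n(F))\subseteq U$; the reverse inclusion is immediate from Proposition \ref{ids} and the definition of $U$. The main tool is the algebra $Gen$ of generic $(G,*)$-matrices, via the identification $T_G^*(M_n(F))=T_G^*(Gen)$, together with Lemmas \ref{tecnico} and \ref{transposicao2} and Proposition \ref{monomial5}.

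Let $f\in T_G^*(M_n(F))$. By the Vandermonde-type reduction recalled after the definition of strongly multi-homogeneous polynomial, I may assume $f$ is strongly multi-homogeneous. Using identity $(3)$, I may discard every monomial that contains a variable $x_{i,g}$ with $g\notin G_0$, since each such monomial lies in $\langle x_{1,g}\rangle_{T_G^*}\subseteq U$. Using Proposition \ref{monomial5}, which puts every $(G,*)$-monomial identity of $M_n(F)$ inside $U$, I may further subtract off every monomial of $f$ that is itself a $(G,*)$-identity, and strong multi-homogeneity is preserved throughout. After these reductions I have $f=\sum_i\lambda_i m_i$ with each $m_i(A)\neq 0$ in $Gen$.

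On the monomials of $f$, declare $m_i\sim m_j$ whenever $m_i(A)$ and $m_j(A)$ share a common nonzero entry at the same position of $M_n(F[\Omega])$. By Lemma \ref{transposicao2}, $m_i\sim m_j$ forces $m_i\equiv m_j\pmod J$, and since $J\subseteq T_G^*(M_n(F))=T_G^*(Gen)$ this congruence yields $m_i(A)=m_j(A)$ as elements of $M_n(F[\Omega])$; transitivity of $\sim$ is then immediate, so $\sim$ is an equivalence relation. Let $C_1,\ldots,C_s$ be the classes, fix a representative $\tilde m_\alpha\in C_\alpha$, and set $M_\alpha=\tilde m_\alpha(A)$; every monomial of $C_\alpha$ is congruent to $\tilde m_\alpha$ modulo $J$ and evaluates to $M_\alpha$. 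Consequently $f\equiv\sum_\alpha c_\alpha\,\tilde m_\alpha\pmod J$, where $c_\alpha=\sum_{i\in C_\alpha}\lambda_i$.

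It remains to check that each $c_\alpha$ vanishes. Since $f$ is an identity of $Gen$, we have $\sum_\alpha c_\alpha M_\alpha=0$ in $M_n(F[\Omega])$. Fix $\alpha_0$; because $M_{\alpha_0}\neq 0$, pick $(p,q)$ with $[M_{\alpha_0}]_{pq}\neq 0$. By the very definition of $\sim$, if $\alpha\neq\alpha'$ and both $[M_\alpha]_{pq}$ and $[M_{\alpha'}]_{pq}$ are nonzero, then these $y$-monomials must be distinct; so the nonzero entries $\{[M_\alpha]_{pq}\}_\alpha$ are pairwise distinct monomials in the free commutative algebra $F[\Omega]$, and their $F$-linear independence forces $c_\alpha=0$ for every $\alpha$ with $[M_\alpha]_{pq}\neq 0$, in particular $c_{\alpha_0}=0$. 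Therefore $f\equiv 0\pmod J$, i.e.\ $f\in J\subseteq U$. The main technical hurdles are verifying that transitivity of $\sim$ really does follow from $J\subseteq T_G^*(M_n(F))$, and that distinct classes produce distinct nonzero $y$-monomials at each shared position; the latter is the point at which the explicit product formula in Lemma \ref{lema0} is essential.
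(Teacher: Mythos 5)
Your proof is correct and follows essentially the same route as the paper: reduce to strongly multi-homogeneous polynomials, discard the monomials that are $(G,*)$-identities via identity (3) and Proposition \ref{monomial5}, evaluate on the generic $(G,*)$-matrices, and invoke Lemma \ref{transposicao2} when two monomials share a nonzero entry in the same position. The only difference is organizational: the paper argues by a minimal counterexample on the number of summands, while you partition the monomials into equivalence classes and annihilate each class coefficient using that distinct nonzero entries are distinct (hence linearly independent) monomials of $F[\Omega]$ --- which in fact spells out the cancellation step the paper only asserts.
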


\begin{proof}
From Proposition \ref{ids}, $U \subseteq T_{G}^{*}(M_{n}(F))$. Let
us suppose \newline $U \subsetneqq T_{G}^{*}(M_{n}(F))$ Then, there
exists a strongly multi-homogeneous polynomial $\newline f\in
T_G^*(M_n(F))-U$. By writing $f=\sum_{i=1}^l\lambda_im_i$, we may suppose that all $\lambda_{i} \in F - \{0\}$, $m_{i} \in
F\langle X |(G,*)\rangle$ are monomials, which are not $(G,*)$-identities for $M_n(F)$ and that the number $l$ of nonzero summands of $f$ is minimal among the strongly multi-homogeneous polynomials $f\in T_G^*(M_n(F))-U$.

Since $f\in T_{G}^{*}(M_n(F))$, we have
\[\lambda_1 m_1\equiv \sum_{i=2}^l-\lambda_i m_i \mod U.\]
By substituting the variables by generic matrices, we obtain that $m_1$ and some $m_j$, $j>1$, have the same nonzero entry in the same position. Hence, by Lemma \ref{transposicao2}, we conclude that $m_{1} \equiv
m_{j}  \mod U$.

Now let \[h = (\lambda_{j}+\lambda_1)m_1 + \sum_{i\not\in \{1,j\}} \lambda_im_i\] Then $h\equiv f \mod U$ and the number of non-zero summands of $h$ is $l-1<l$. This is a contradiction.
\end{proof}

\begin{flushleft}
    \textbf{Acknowledgements}
\end{flushleft}
This work was started when the authors were visiting IMPA in (brazilian) summer 2016. The authors would like to thank IMPA for the hospitality and for the financial support.


\begin{thebibliography}{99}
\bibitem{Azevedo} \textrm{S. S. Azevedo}, \textit{Graded identities for the matrix algebra of order n over an infinite field}, Comm. Algebra 30 (2002), no. 12, 5849--5860.
\bibitem{Azevedo2} \textrm{S. S. Azevedo}, \textit{A basis for $\mathbb{Z}$-graded identities of matrices over infinite fields}, Serdica Math. J. 29 (2003), no. 2, 149--158.
\bibitem{BahturinDrensky}\textrm{Yu. Bahturin, V. Drensky}, \textit{Graded polynomial identities of matrices}, Linear Algebra and its Applications, 357 (2002), 15--34.
\bibitem{CentroneMello} \textrm{L. Centrone, T. C. de Mello}, \textit{On $\mathbb{Z}_n$-graded identities of block-triangular matrices}, Linear and Multilinear Algebra 63 (2015), no. 2, 302--313.
\bibitem{Dascalescu} \textrm{S. D\u asc\u alescu, C. N\u ast\u alescu and J.R.
Montes} \textit{Group gradings on full matrix rings}, J. Algebra 220
(1999), 709-728.
\bibitem{DiVincenzo} \textrm{O. M. Di Vincenzo}, \textit{On the graded identities of $M_{1,1}(E)$}, Israel J. Math. 80 (1992), no.3, 323--335.
\bibitem{Diogo}\textrm{D. Diniz}, \textit{On the graded identities for elementary gradings in matrix algebras over infinite fields}, Linear Algebra and its Applications 439 (2013), 1530--1537.
\bibitem{DiogoThiago}\textrm{D. Diniz, T.C. de Mello}, \textit{Graded identities of block-triangular matrices}, J. Algebra 464, (2016) , 246-265.
\bibitem{FonsecaMello}\textrm{L.F.G. Fonseca, T.C. de Mello},
\textit{Degree-inverting involutions on matrix algebras}, arXiv:1606.02192v2, (2017).
\bibitem{Haile0}\textrm{D. Haile,M. Natapov}, \textit{A graph theoretic approach to graded identities
matrices}, J. Algebra 365 (2012), 147-162.
\bibitem{Haile}\textrm{D. Haile, M. Natapov}, \textit{Graded
polynomial identities for matrices with the transpose involution},
J. Algebra 464 (2016), 175--197.
\bibitem{Jones} \textrm{J. Colombo, P. Koshlukov}, \textit{Identitities with involution for matrix algebra of order
two.} Israel Journal of Mathematics 148 (2005), 337-355.
\bibitem{Kemer} \textrm{A. Kemer}, \textit{Solution of the problem as to whether associative algebras have a finite basis of identities}, Dokl. Akad. Nauk SSSR 298 (1988), 273--277; translation in Soviet Math. Dokl. 37 (1988), 60--64.
\bibitem{Kemer2} \textrm{A. Kemer}, \textit{Ideals of Identities of Associative Algebras}, Translations of Monographs, vol. 87, Amer. Math. Soc., Providence, RI, 1991.
\bibitem{Razmyslov} \textrm{Yu. P. Razmyslov}, \textit{Finite basing of the identities of a matrix algebra of second order over a field of characteristic zero}, Algebra i Logika 12, no. 1 (1973), 83--113 [in Russian]; Algebra and Logic 12 (1973), 47--63 [Engl. transl.].
\bibitem{Specht} \textrm{W. Specht}, \textit{Gesetze in Ringen.} I. Math. Z. 52 (1950), 557-589.

\bibitem{Vasilovsky} \textrm{S. Yu. Vasilovsky} \textit{$\mathbb{Z}$-graded polynomial identities of the full matrix algebra} Comm. Algebra 26 (1998), no. 2, 601--612.
\bibitem{Vasilovsky2} \textrm{S. Yu. Vasilovsky} \textit{$\mathbb{Z}_n$-graded polynomial identities of the full matrix algebra of order $n$} Proc. Amer. Math. Soc. 127 (1999), no. 12, 3517--3524.

\end{thebibliography}
\end{document}